\newcommand{\gf}[2]{\genfrac{}{}{0pt}{}{#1}{#2}}
\newcommand{\qci}{{q.c.i.\! }}
\newcommand{\cls}{\operatorname{cls}}
\def\Fdagger{\mathfrak F}
\def\Gdagger{\mathfrak G}
\def\Pdagger{\mathfrak P}
\def\Tdagger{\mathfrak T}
\def\Ftil{\widetilde{F}}
\def\Gtil{\widetilde{G}}
\def\mcI{\mathcal I}
\def\xu{\underline{x}}
\def\Ring{\mathfrak R}
\def\ov{\overline}
\def \Rbar{\ov{\Ring}}
\def\Fbar{\ov{\Fdagger}}
\def\Gbar{\ov{\Gdagger}}
\def\ff{\goth f}
\def\gog{\goth g}
\def\M{\mathfrak M}
\def\ev{\operatorname{ev}}
\def\la{\langle}
\def\ra{\rangle}
\def\phibar{\ov{\Phi}}
\def\Xibar{\ov{\Xi}}
\def\Rtil{\widetilde{R}}
\def\done{\xi}
\def\blop{\phi}
\def\depth{\operatorname{depth}}
\def\CIdim{\operatorname{CI-dim}}
\def\Gdim{\operatorname{G-dim}}
\def\Max{\operatorname{Max}}
\def\pd{\operatorname{pd}}
\def\cx{\operatorname{cx}}
\def\p{\oplus}
\def\im{\operatorname{im}}
\def\mult{\operatorname{mult}}
\def\m{\mathfrak m}
\def\a{\alpha}
\def\Sym{\operatorname{Sym}}
\def\Hom{\operatorname{Hom}}
\def\kk {\pmb k}
\def\Tor{\operatorname{Tor}}
\def\t{\otimes}
\def\w{\wedge}
\def\HH{\operatorname{H}}
\def\grade{\operatorname{grade}}
\def\Ext{\operatorname{Ext}}
\def\ts{\textstyle}
\newtheorem{theorem}{Theorem}[section]
\newtheorem{corollary}[theorem]{Corollary}
\newtheorem{proposition}[theorem]{Proposition}
\newtheorem{proposition-no-advance}[equation]{Proposition}
\newtheorem{claim-no-advance}[equation]{Claim}
\newtheorem{observation}[theorem]{Observation}
\newtheorem{subtheorem}{Theorem}[theorem]
\newtheorem{question}[theorem]{Question}
\newtheorem{quick consequences}[theorem]{Quick Consequences}
\theoremstyle{definition}
\newtheorem{definition-no-advance}[equation]{Definition}
\newtheorem{construction}[theorem]{Construction}
\newtheorem{facts and definitions}[theorem]{Facts and Definitions}
\newtheorem{definition}[theorem]{Definition}
\newtheorem{remark}[theorem]{Remark}
\newtheorem{remark-no-advance}[equation]{Remark}
\newtheorem{remarks-no-advance}[equation]{Remarks}
\newtheorem{data}[theorem]{Data}
\newtheorem{careful calculation}[theorem]{Careful Calculation}
\newtheorem{present summary}[theorem]{Present Summary}
\newtheorem{examples}[theorem]{Examples}
\newtheorem{further reductions}[theorem]{Further Reductions}
\newtheorem{chunk}[theorem]{}
\newtheorem{chunk-no-advance}[equation]{}
\newtheorem{subchunk}[subtheorem]{}
\newtheorem{marching orders}[theorem]{Marching Orders}
\newtheorem{circle the wagons}[theorem]{Circle the wagons}
\newtheorem*{Remark}{Remark}
\numberwithin{equation}{theorem}
\numberwithin{table}{theorem}
\begin{document}

\baselineskip=16pt

\title [The structure of quasi-complete intersection ideals]{The structure of quasi-complete intersection ideals}

\date{\today}

\author[A.~R.~Kustin]{Andrew R.~Kustin}
\address{Andrew R.~Kustin\\ Department of Mathematics\\ University of South Carolina\\\newline 
Columbia\\ SC 29208\\ U.S.A.} \email{kustin@math.sc.edu}

\author[L.~M.~\c{S}ega]{Liana M.~\c{S}ega}
\address{Liana M.~\c{S}ega\\ Department of Mathematics and Statistics\\
   University of Missouri\\  Kansas City\\ MO 64110\\ U.S.A.}
     \email{segal@umkc.edu}

\subjclass[2010]{13D02,13D07,13C40,13D03}

\keywords{complete intersection dimension, complete resolution, ideals with free exterior homology, quasi-complete intersection, Tate homology, two-step Tate complex}

\thanks{Research partly supported by  Simons Foundation collaboration grant 233597 (ARK) and  Simons Foundation collaboration grant 354594 (LM\c{S})}

\begin{abstract}   We prove that every quasi-complete intersection (\qci\!\!)  ideal  is obtained from a pair of nested complete intersection ideals by way of a flat base change. As a by-product we establish a rigidity statement for the minimal two-step Tate complex associated to an ideal $I$ in a local ring $R$. 
Furthermore, we define a minimal two-step complete Tate complex $T$ for each ideal $I$ in a local ring $R$; and prove a rigidity result for it. The complex $T$ is exact if and only if $I$ is a \qci ideal; and in this case, $T$ is the minimal complete resolution of $R/I$ by free $R$-modules. 
\end{abstract}

\maketitle

\section{Introduction.}
Let $R\to S$ be a homomorphism of commutative Noetherian rings. Quillen \cite[5.6]{Qu70}  conjectured that if the Andr\'e-Quillen homology functors $D_i(S|R,-)$ vanish for all large $i$, then they vanish for $3\le i$. We investigate the structure of ideals $I$ in a local Noetherian ring $R$ for which the natural quotient map $R\to S=R/I$ satisfies the conclusion of the Quillen conjecture. Such ideals are called quasi-intersection (\qci\!\!) ideals; see \cite{AHS13}. (Other equivalent definitions are given in Section~\ref{2.B}.)
 The title of the present paper refers to Corollary~\ref{every} which states 
that  
every \qci ideal in a local Noetherian ring is obtained from a pair of nested complete intersection ideals by way of a ``flat base change'', in the sense of \cite[8.7]{AHS13}.

Given a particular \qci ideal $I$, the proof of Corollary~\ref{every} for $I$ involves creating a  generic pair of nested complete intersection ideals for $I$ in $R$. As such, we view the two-step Tate complex for $I$ as being obtained from the generic case by way of a base change. The generic case is the prototype of a module of finite complete intersection dimension ($\CIdim$). Theorems due to Jorgensen \cite{Jo99} and Avramov and Buchweitz \cite{AB00} prove that if $M$ is an $R$-module of finite  $\CIdim$, then $\Tor_i^R(M,N)$ can not vanish for many consecutive values of $i$ unless it vanishes for all large values of $i$. We use these Theorems to prove a strong rigidity result, Theorem~\ref{main}, about the two-step
Tate complex associated to an ideal. Corollary~\ref{Rigidity} recovers and extends a rigidity result of Jason Lutz \cite{L16} using apparently different methods. The rigidity results of Theorem~\ref{main} and Corollary~\ref{Rigidity} are prettiest when they are expressed in terms of Tate homology rather than  ordinary homology. As a consequence, we give an \underline{explicit} form for the complete resolution of each \qci ideal. 

 It was not possible to explain the
 \qci ideal $I$ of the ring $R$ in \cite[Sect.~4]{KSV14} using any of the techniques that appeared in \cite{KSV14}. Indeed, this example seemed to indicate that \qci ideals could be arbitrarily complicated. However, we show in Example~\ref{Ex}.(\ref{Ex.d}) that 
$I$ is obtained from a pair of nested complete intersection ideals $A\subseteq B$ in a local ring $\Ring$ by way of a base change $\Ring/A\to R$ and that $R$ has finite projective dimension as a module over $\Ring/A$. 
We 
apply the rigidity portion of Theorem~\ref{main} in order to conclude that the base change 
 is flat; hence, the unexplained example in \cite{KSV14} is accounted for by Corollary~\ref{every}. 

\tableofcontents

\section{Notation, conventions, and preliminary results.}\label{Prelims}

\subsection{Terminology}
\begin{chunk} When it is clear that ``$R$'' is the ambient ring, we use $(-)^*$ to mean $\Hom_R(-,R)$. 

\begin{subchunk}``Let $(R,\mathfrak m,\kk)$ be a local ring'' identifies $\mathfrak m$ as the unique maximal ideal of the commutative Noetherian local ring $R$ and $\kk$ as the residue class field $\kk=R/\mathfrak m$. If $M$ is a finitely generated $R$-module, we denote by $\mu(M)$ the minimal number of generators of $M$.
\end{subchunk}

\begin{subchunk}We use 
``$\im$'', ``$\ker$'', ``$\mult$'', and ``$\pd$''
 as  abbreviations for ``image'', ``kernel'', ``multiplication'', and ``projective dimension'',  respectively.\end{subchunk} 

\begin{subchunk}If $M$ is a finitely generated module over a local ring $(R,\m)$ and $\phi:M\to N$ is an $R$-module homomorphism, then $\phi$ is a {\it minimal homomorphism} if $\ker\phi\subseteq \m M$.
\end{subchunk} 

\begin{subchunk}The {\it grade} of an ideal $I$  in a commutative Noetherian ring $R$ is the length of a maximal regular sequence on $R$ which is 
contained in $I$.\end{subchunk}

\begin{subchunk}If $\mathbb Y$ is a complex, then we use $Z_i(\mathbb Y)$,  $B_i(\mathbb Y)$, and $\HH_i(\mathbb Y)$ to represent the modules of $i$-cycles, $i$-boundaries, and $i^{\text{th}}$-homology  of $\mathbb Y$, respectively. If $z$ is an $i$-cycle, then $\cls(z)$ is the homology class of $z$ in $\HH_i$. In a similar manner, $Z^i$, $B^i$, and $\HH^i$ represent co-cycles, co-boundaries, and cohomology, respectively. A {\it quasi-isomorphism} is a homomorphism of  complexes that induces an isomorphism on homology. \end{subchunk} 

\begin{subchunk}If $\Phi$ is a matrix (or a homomorphism of finitely generated free $R$-modules), then $I_r(\Phi)$ is the ideal generated by the
$r\times r$ minors of $\Phi$ (or any matrix representation of $\Phi$). \end{subchunk}

\begin{subchunk}Let $F$ be a free $R$-module of finite rank, $F^*=\Hom_R(F,R)$,  $\ev:F\t F^*\to R$ be the evaluation map, and $\ev^*:R\to F^*\t F$ be the dual of the evaluation map. If $(\{x_i\},
\{x_i^*\})$ is a pair of dual bases for $F$ and $F^*$, respectively, then  $$\ts \ev^*(1)=\sum_i x_i^*\t x_i.$$ Of course, $\sum_i x_i^*\t x_i$ is a canonical element of $F^*\t F$. \end{subchunk} 

\begin{subchunk}If $F$ is a free $R$-module of finite rank $\ff$, then the exterior algebra
$\bigwedge^{\bullet}F$ is a module over the graded-commutative ring $\bigwedge^{\bullet}F^*$. In particular, if $\xi\in F^*$, then $(\bigwedge^\bullet F,\xi)$ is the Koszul complex 
$$\ts 0\to \bigwedge^\ff F\xrightarrow{\xi}
\bigwedge^{\ff-1} F\xrightarrow{\xi}\cdots \xrightarrow{\xi}
\bigwedge^{2} F\xrightarrow{\xi} F\xrightarrow{\xi}R.$$ We refer to $(\bigwedge^\bullet F,\xi)$ as {\it the Koszul complex associated to $\xi$}.
In a similar manner, the divided power algebra $D_\bullet F^*$ is a module over the polynomial ring $\Sym_\bullet F$.
\end{subchunk}

\begin{subchunk}\label{2.1.9}If $I$ is an ideal in a local ring $R$ and $\xi:F\to R$ is a minimal homomorphism with $\im \xi=I$, then $\mu(\HH_1(\bigwedge^\bullet F,\xi))$ does not depend on $\xi$ (\cite[1.6.21]{BH}) and we denote this number by 
$\mu(K_1(I))$. We refer to this number as ``the minimal number of  generators of the first Koszul homology  module associated to a minimal generating set for $I$''.
\end{subchunk} 
\end{chunk}

\subsection{Quasi-complete intersections and the two-step Tate Complex.}\label{2.B}
\begin{data}\label{9.1}Let $I$ be an ideal in a local ring $(R,\m)$, $F$ be a free $R$-module of finite rank, $\done:F\to R$  be an $R$-module homomorphism with the image of $\done$ equal to $I$, and $\bigwedge^\bullet F$ be the Koszul complex associated to $\done$. \end{data}

\begin{definition-no-advance}\label{9.1.1}In the setup of {\rm\ref{9.1}}, the ideal $I$ is  a {\it quasi-complete intersection} (\qci\!)  if
\begin{enumerate}[\rm(a)]
\item\label{9.1.a} $\HH_1(\bigwedge^\bullet F)$ is a free $R/I$-module and
\item\label{9.1.b} the natural map 
$\bigwedge^\bullet \big(\HH_1(\bigwedge^\bullet F)\big) \to \HH_{\bullet} (\bigwedge^\bullet F)$ is an isomorphism of graded $R/I$-algebras. 
\end{enumerate}
\end{definition-no-advance}

\begin{remarks-no-advance}\begin{enumerate}[\rm(a)]\item The defining conditions \ref{9.1.1}.(\ref{9.1.a}) and \ref{9.1.1}.(\ref{9.1.b}) of \qci ideals do not depend on the choice of presentation for $R/I$ because the ambient ring $R$ is local; see, for example, \cite[1.6.21]{BH}.
\item The transition from \qci ideals as defined in the introduction to \qci ideals as defined in \ref{9.1.1} is contained in \cite[8.5]{AHS13}.
\item Ideals which satisfy properties \ref{9.1.1}.(\ref{9.1.a}) and \ref{9.1.1}.(\ref{9.1.b}) were first studied in \cite{R95}; they were named {\it ideals with free exterior homology} in \cite{BMR96}.
\end{enumerate}
\end{remarks-no-advance}

The two-step Tate complex detects \qci ideals. 
\begin{definition-no-advance}\label{two-step} Adopt the setup of \ref{9.1}. Let $G$ be a free $R$-module and $$\ts \blop:G\to Z_1(\bigwedge^\bullet F)$$ be an $R$-module homomorphism with the property that the composition 
$$\ts G\xrightarrow{\blop}Z_1(\bigwedge^\bullet F)\xrightarrow{\text{natural quotient map}}\HH_1(\bigwedge^\bullet F)$$ is a minimal surjection. Then the Divided Power Algebra $$\ts P=\la  \bigwedge^\bullet F\t_{R} D_{\bullet} {G},\partial\ra,$$ where the restriction of the differential $\partial$ to $F$ is given by $\done$ and the restriction of $\partial$ to $G$ is given by $\blop$, is called the {\it two-step Tate complex} associated to the data $(\done,\blop)$. 
\end{definition-no-advance}
 
\begin{remark-no-advance}\label{TateConstruction}A coordinate-dependent formulation of the two-step Tate complex may be found in \cite[1.5]{AHS13} and many other places. In this alternate language, $P$ is called a ``Tate construction'' and is written
$$R\la v_1,\ldots,v_{\ff}; w_1,\ldots,w_{\gog}\ra,$$where the exterior variables   $v_1,\ldots,v_{\ff}$ are a basis for $F$ and the divided power variables $w_1,\ldots,w_{\gog}$ are a basis for $G$.
\end{remark-no-advance}

\begin{proposition-no-advance}\label{2.2.4} {\rm (\cite[Thm.~1]{BMR98})}
In the language of Definition~{\rm\ref{two-step}},
the two-step Tate complex associated to the data $(\done,\blop)$ is acyclic if and only if $I$ is a \qci ideal.\end{proposition-no-advance}
 
\begin{Remark}  Observe that in Proposition~\ref{2.2.4} the homomorphism $\done$   need not be minimal, but the homomorphism $\blop$ must be minimal.  On the other hand, if $\done$ is a minimal homomorphism and the conditions of \ref{2.2.4} hold, then the two-step Tate complex associated to the data $(\done,\blop)$ is a minimal resolution of $R/I$ by free $R$-modules.\end{Remark}

\begin{proposition-no-advance}\label{2.2.5}Adopt the setup of Definition~{\rm\ref{9.1.1}}. Let $\xu=x_1,\dots,x_r$ be the beginning of a minimal generating set for $I$ which is also a regular sequence on $R$, $I'=I/(\xu)$, and $R'=R/(\xu)$.
\begin{enumerate}[\rm(a)]
\item\label{2.2.5.a} Natural data   $(\done',\blop')$ for $I'$ in $R'$ can be constructed from the given data $$\ts\big(\done:F\to R,\blop:G_1\to Z_1(\bigwedge^\bullet F)\big)$$ for $I$ in $R$.
\item\label{2.2.5.b} There is a quasi-isomorphism from the two-step Tate complex associated to the data $(\done,\blop)$ to
the two-step Tate complex associated to the data  $(\done',\blop')$. 
\end{enumerate} 
 \end{proposition-no-advance}

\begin{proof} (\ref{2.2.5.a})
Let $X$ be a free summand of $F$ with $\done(X)=(\xu)$. Define
$$F'=(F/X)\t_R R',$$ $\done':F'\to R'$ to be the map induced by 
$$F\t_RR'\xrightarrow {\done\t_R1} R\t_R R',$$
$G'=G\t_RR'$, and $\blop': G'\to F'$ to be the composition
$$G'=G\t_RR' \xrightarrow{\blop\t 1}F\t_RR'\xrightarrow{\text{natural quotient map}}F/X\t_R R'=F'.$$ The proof of \cite[Lem~1.3]{AHS13} shows that the natural map of Koszul complexes \begin{equation}\label{nmoKc}\ts(\bigwedge^\bullet F,\done)\to
(\bigwedge^\bullet F',\done')\end{equation} is a quasi-isomorphism. It follows, in particular, that the composition 
$$\ts G'\xrightarrow{\blop'} Z_1(\bigwedge^\bullet F') \xrightarrow{\text{natural quotient map}} \HH_1(\bigwedge^\bullet F')$$ is a minimal surjection.

\medskip\noindent(\ref{2.2.5.b})
The quasi-isomorphism (\ref{nmoKc}) can be extended to a quasi-isomorphism of the two-step Tate complexes by 
\cite[1.3.5]{GL}. \end{proof}

\subsection{Complete resolutions and Tate homology.}
\begin{chunk}Let $R$ be a commutative Noetherian ring, $(-)^*$ represent $\Hom_R(-,R)$, and 
$M$ be a finitely generated $R$-module. 
\begin{subchunk}\label{2.1.1}A {\it complete resolution} of  $M$ is a complex $T$ of finitely generated
projective $R$-modules, such that $\HH_i (T) = 0 = \HH^i (T^*)$ for all integers $i$, and
$T_{\ge r} =F_{\ge  r}$ for some projective resolution $F$ of M and some integer $r$. If $M$ has complete resolutions, then any two of them are
homotopy equivalent, see, for example  
\cite[Lem.~2.4]{CK97}.
In particular, in this case, the modules
$\widehat{\Tor}_i^ R (M, N ) = \HH_ i (T \t_R N )$
are well defined for all $R$-modules $N$; we refer to these modules as  
Tate $\Tor$ modules. 
\end{subchunk}

\begin{subchunk} The module $M$ is {\em totally reflexive} if 
$M\cong M^{**}$ 
 and $$\Ext^i_R (M, R)=\Ext^i_R (M^* , R)=0,$$
 for all positive $i$. \end{subchunk}

\begin{subchunk} If the module $M$ is not zero, then the $G$-{\it dimension} of $M$  is the length of the shortest resolution
of $M$ by totally reflexive $R$-modules.\end{subchunk}

\begin{subchunk}\label{2.1.4} The module $M$ has a complete resolution if and only if 
the $G$-dimension of $M$ is finite; see, for example, \cite[4.4.4]{AB00}.
\end{subchunk}

\begin{subchunk} If the ring $(R,\m)$ is local and the complete resolution 
$(T,d)$ of $M$ satisfies $d(T)\subseteq \m T$, then $T$ is a {\it minimal complete resolution} of $M$.  
It is shown in \cite [Thm.~8.4]{AM02} 
that  
any two minimal complete resolutions of M are isomorphic. 
\end{subchunk}
\end{chunk}

\subsection{Complete intersection dimension.}
\begin{chunk}
A {\it quasi-deformation} (of codimension $c$) of a local ring $R$ is a diagram
of local homomorphisms $R \to R' \leftarrow Q$, in which the left-most map is   faithfully flat and
the right-most map is surjective with kernel generated by a regular sequence on $Q$ (of
length $c$).
\begin{subchunk}\label{2.3.1}
Let $M$ be a non-zero finitely generated module over a Noetherian ring $R$. If $R$ is local, then
$$\CIdim_R M =\inf\{ \pd_Q(M\t_RR')-\pd_QR'\mid R\to R'\leftarrow Q \text{ is a quasi-deformation}\};$$
in general, the {\it complete intersection dimension} of $M$ over $R$ is
defined by
$$\CIdim_R M = \sup\{\CIdim R_{\m} M_{\m}\mid \m \in \Max(R)\}\quad\text{and}\quad
\CIdim_R 0 = 0,$$ 
where $\pd$ means ``projective dimension'' and $\Max$ means ``maximal spectrum''.
\end{subchunk}

\begin{subchunk}\label{2.2.2}It is shown in \cite[Thm.~1.4]{AGP97} that if $M$ is 
a finitely generated  module $M$ over a Noetherian ring $R$ then 
$$\Gdim_R M \le \CIdim_R M \le \pd_R M.$$
Furthermore, if any of these dimensions is finite, then this dimension  is equal to all dimensions  to its left.
Also, if $R$ is local and $\CIdim_R M < \infty$, then $$\CIdim_R M = \depth R - \depth_R M.$$
\end{subchunk}
\end{chunk}

\subsection{Complexity.}

\begin{chunk}\label{2.4} Let $M$ be a finitely generated module over the local ring $(R,\m,\kk)$. The {\it complexity} of $M$ is equal to 
$$\cx_R M = \inf\left\{ \text{non-negative integers $d$} \left\vert \begin{array}{l}\text{there exists a positive real number $\gamma$}\\\text{with $b_i^R(M)\le \gamma i^{d-1}$ for   $0\ll i$} \end{array}\right.\right\},$$ 
where $b_i^R(M)$ is the $i^{\text{th}}$-Betti number
$\dim_{\kk}\Tor_R^i(\kk,M)$
of the $R$-module $M$. 
If the CI-dimension of $M$ is finite, then \cite[Thm.~5.3]{AGP97} proves that the complexity of $M$ is finite and is equal to the order of the pole at $t=1$ of the Poincar\'e series $$P_M^R(t)=\sum_{i=0}^\infty b_i^R(M) t^i.$$
\end{chunk}

\subsection{The vanishing theorem for homology of modules of finite CI-dimension.}

The following theorem of Avramov and Buchweitz plays a central role in this paper. It should be noted that the hypothesis that $M$ has finite CI-dimension guarantees that $M$ has finite G-dimension (\ref{2.2.2}) and that the Tate homology modules $\widehat{\Tor}_i(M,-)$ are defined (\ref{2.1.4}) and (\ref{2.1.1}). A version of the equivalence of 
the first three conditions was shown by Jorgensen \cite[Thm.~2.1]{Jo99}. The final three conditions are much less fussy than the first three conditions; consequently, they serve as an advertisement for Tate homology.
\begin{theorem}\label{2.4*} {\bf \cite[Thm.~4.9]{AB00}}
 If $R$ is a Noetherian ring, and $M$ is a finitely generated  $R$-module of finite {\rm CI}-dimension, then for each $R$-module $N$ the following conditions are equivalent{\rm:}
\begin{enumerate}[\rm(i)]
\item $\Tor _i ^R (M, N) = 0$ for $\cx_R M + 1$ consecutive values of $i$ provided each of these values $i$ satisfies   $\CIdim_R M<i${\rm;}
\item $\Tor _i^ R (M, N) = 0$ for $0\ll i${\rm;}
\item $\Tor_i^ R (M, N) = 0$ for all $i$ with $\CIdim_RM<i${\rm;}
\item $\widehat\Tor_i^R (M, N) = 0$ for $\cx_R M + 1$ consecutive values of $i${\rm;}
\item $\widehat\Tor_i^R (M, N) = 0$  for $i \ll 0${\rm;} and
\item $\widehat\Tor_i^R (M, N) = 0$  for all integers $i$.
\end{enumerate}
\end{theorem}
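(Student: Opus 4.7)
The plan is to reduce the theorem to a structural statement about the graded $\Ext$ module regarded as a finitely generated module over the polynomial ring of cohomology operators, where a Hilbert series rigidity lemma converts ``interval vanishing'' into ``eventual vanishing.''

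First I would reduce to the local case (immediate from the formulation of $\CIdim$ in \ref{2.3.1}) and fix a quasi-deformation $R \to R' \leftarrow Q$ realizing $\CIdim_R M$, so that $R' = Q/(f_1,\ldots,f_c)$ for a $Q$-regular sequence and $M' := M \otimes_R R'$ has finite projective dimension over $Q$. Setting $N' := N \otimes_R R'$, faithful flatness of $R \to R'$ shows that $\Tor_i^R(M,N)$ vanishes if and only if $\Tor_i^{R'}(M',N')$ vanishes, and similarly for Tate $\Tor$. Next I would invoke the Eisenbud--Gulliksen cohomology operators $\chi_1,\ldots,\chi_c$ of degree $2$, determined by $f_1,\ldots,f_c$, and use Gulliksen's finiteness theorem: because $\pd_Q M' < \infty$, the graded module $E := \Ext^*_{R'}(M',N')$ is finitely generated over the graded polynomial ring $S := R'[\chi_1,\ldots,\chi_c]$. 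A standard support-variety argument (via the Poincar\'e series computation of \ref{2.4}) shows that $\cx_R M$ controls the Krull dimension of $E$ over the $\kk$-form $\kk[\chi_1,\ldots,\chi_c]$ of $S$.

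With this in hand, the equivalences (i)--(iii) follow from the rigidity lemma: a finitely generated graded module over a polynomial ring of Krull dimension at most $d$ cannot have $d+1$ consecutive vanishing graded components past its Castelnuovo--Mumford regularity without vanishing in all higher degrees. One proves this by examining the Hilbert series: the order of its pole at $t=1$ equals the Krull dimension, so $d+1$ consecutive zero coefficients in the numerator force it to be divisible by $(1-t^2)^{d+1}$, which kills the entire tail. Applied with $d = \cx_R M$ and starting index $> \CIdim_R M$, this yields (i) $\Rightarrow$ (iii) $\Rightarrow$ (ii); the implication (ii) $\Rightarrow$ (i) is immediate.

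Finally, for (iv)--(vi) I would invoke the existence of a complete resolution $T$, which is guaranteed by finite $G$-dimension (\ref{2.2.2} and \ref{2.1.4}). Since $T_{\geq r} = F_{\geq r}$ for some projective resolution $F$ and some $r$, one has $\widehat{\Tor}_i^R(M,N) = \Tor_i^R(M,N)$ for every $i > \CIdim_R M$, so (v) reduces to (ii) and (vi) reduces to (iii). For (iv), the Eisenbud operators act on the Hom complex of a complete resolution, extending the $S$-module structure to $\widehat{\Ext}^*_{R'}(M',N')$, which remains finitely generated; the same rigidity lemma now applies in the two-sided graded setting. The main obstacle, and the delicate point throughout, is tracking the sharp bound ``$\cx_R M + 1$'' rather than ``$c+1$''; this requires the complexity-to-Krull-dimension inequality to be carried through the change-of-rings from $Q$ to $R'$ precisely, so that the combinatorial lemma is applied at exactly the correct dimension.
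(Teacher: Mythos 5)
The paper proves this theorem by citation to Avramov--Buchweitz; your sketch follows the same high-level route (Eisenbud--Gulliksen cohomology operators, Gulliksen finiteness, complexity as Krull dimension over the ring of operators), but two of your key steps do not hold as written.

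First, the Hilbert-series ``rigidity lemma'' is not a correct statement. You claim that $d+1$ consecutive zero coefficients in $\HS_E(t)$ force divisibility of the numerator by $(1-t^2)^{d+1}$, but having consecutive zero coefficients in a power series bears no relation to polynomial divisibility (for instance $1+t^{100}$ has $99$ consecutive zero coefficients and is not divisible by any power of $1-t^2$); and because the $\chi_i$ have degree $2$, the even- and odd-degree parts of $E$ are separate modules, so a naive count needs roughly $2d$ consecutive zeros, not $d+1$. Moreover, your ``past its Castelnuovo--Mumford regularity'' hypothesis does not match what the theorem provides: the module $\Ext^*_{R'}(M',N')$ is generated over $S$ in degrees up to $\pd_Q M' = \CIdim_R M + c$, while the theorem only gives vanishing past $\CIdim_R M$, which is strictly below the generation bound when $c\ge 1$. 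The actual argument in \cite{AB00} works with the support variety of the pair $(M,N)$ and reduces the number of operators by cutting with a generic linear form, an induction that does not reduce to a one-shot Hilbert series observation; this is precisely how they sharpen Jorgensen's ``codimension $+1$'' window to the ``complexity $+1$'' window.

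Second, the treatment of (iv)--(vi) fails. The identification $\widehat{\Tor}_i^R(M,N)=\Tor_i^R(M,N)$ holds only for $i>\CIdim_R M$, while (v) and (vi) are statements about $i\ll 0$ and about all $i\in\mathbb Z$, so neither ``reduces'' to a statement about ordinary Tor via that identification. The claim that $\widehat{\Ext}^*_{R'}(M',N')$ ``remains finitely generated'' over $S=R'[\chi_1,\dots,\chi_c]$ is false: a finitely generated $\mathbb Z$-graded module over a polynomial ring whose variables have positive degree is automatically bounded below, whereas stable Ext is nonzero in arbitrarily negative degrees whenever $\pd_{R'}M'=\infty$. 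The correct mechanism (as in \cite{AB00}) is that stable Ext is obtained from ordinary Ext by inverting the degree-$2$ operators, together with the duality built into the complete resolution of a totally reflexive syzygy; one needs that structure, not finite generation over $S$, to transport vanishing from $i\gg 0$ to $i\ll 0$.
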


\section{The two-step complete Tate complex associated to an ideal in a local ring.}\label{complete Tate}
Let $I$ be an  ideal  in a local ring $R$ with $\mu(K_1(I))\le \mu(I)$. In \ref{Define T} we define the minimal two-step complete Tate complex
$$ T:\quad
 \cdots \to T_1\to T_0 \to T_{-1}\to \cdots$$ for $I$ in $R$. In Corollary~\ref{Rigidity} we prove that $T$ is exact if and only if $I$ is a q.c.i.; furthermore, in this case, $T$ is the minimal complete resolution of $R/I$ by free $R$-modules.

\begin{data}\label{UniformData}Let $(R,\m)$ be a local ring,
$I$ be a proper 
 ideal of $R$ which is minimally generated by $\ff$ elements, $F$ be a 
 free $R$-module of rank $\ff$, and $\done:F\to R$ be an  
 $R$-module homomorphism with $\im (\done)=I$. 
Let $\bigwedge^\bullet F$ be the Koszul complex associated to $\done$, $\gog$ be the minimal number of generators of $\HH_1(\bigwedge^\bullet F)$, $G$ be a 
free $R$-module of rank $\gog$, and $$\ts\blop:G\to Z_1 (\bigwedge^\bullet F)$$ be an 
 $R$-module homomorphism with the property that the composition
\begin{equation}\label{composition}\ts G\xrightarrow{\blop}Z_1 (\bigwedge^\bullet F)\xrightarrow{\text{natural quotient map}}H_1 (\bigwedge^\bullet F)\end{equation} is a  
 surjection.\end{data}

\begin{remark-no-advance}\label{emphasize} We emphasize that the parameters $\ff$ and $\gog$ of Data~\ref{UniformData}
are equal to $\mu(I)$ and $\mu(K_1(I))$, respectively; and that the homomorphisms $\xi:F\to R$ and $G\to \HH_1(\bigwedge^\bullet F)$ of (\ref{composition}) are minimal homomorphisms.
\end{remark-no-advance}

\begin{definition}\label{Define T} Adopt the data of \ref{UniformData}. 
Let  $(-)^\vee$ be the functor $\Hom_{R}(-,\bigwedge^{\ff}F)$ and  
$$\ts P=\la  \bigwedge^\bullet F\t_{R} D_{\bullet} {G},\partial\ra,$$ be the 
two-step Tate complex associated to the data $(\done,\blop)$ of \ref{two-step}.
For $0\le i\le \ff-\gog$, define  an $R$-module homomorphism $$\ts \a_i:P_i\t_{R}\bigwedge^{\gog}G\to (P_{\ff-\gog-i})^\vee$$ as follows. Every component of $\a_i$ is zero except the component
\begin{equation}\notag\ts \bigwedge^i F\t_{R} \bigwedge^{\gog}G\to 
(\bigwedge^{\ff-\gog-i}F)^\vee; \end{equation}
and, if $\theta_i\in \bigwedge^i F$ and $\omega_{\gog}\in \bigwedge^{\gog}G$, then $$\ts\a_i(\theta_i\t \omega_{\gog}):
\bigwedge^{\ff-\gog-i}F\to\bigwedge^{\ff}F
$$ is the homomorphism which sends $\theta_{\ff-\gog-i}\in \bigwedge^{\ff-\gog-i}F$ to $$\ts(-1)^{\frac{i(i-1)}2+i\gog}\theta_i\w (\bigwedge^{\gog}\phi)\omega_{\gog}\w \theta_{\ff-\gog-i}\in 
\bigwedge^{\ff}F.$$ Let  $\ts \a:P\t_{R} \bigwedge^{\gog}G\to 
P^{\vee}[\goth g-\goth f]$ represent the picture
$$\xymatrix{
\cdots\ar[r]^(.35){\partial_{\goth f-\goth g+1}}&P_{\goth f-\goth g}\t_{R} \bigwedge^{\gog}G\ar[r]^(.65){\partial_{\goth f-\goth g}}\ar[d]^{\alpha_{\goth f-\goth g}}&\cdots\ar[r]^(.3){\partial_2}&P_1\t_{R} \bigwedge^{\gog}G\ar[r]^{\partial_1}\ar[d]^{\alpha_1}&P_0\t_{R}\bigwedge^{\gog}G\ar[r]\ar[d]^{\alpha_0}&0\\
0\ar[r]&(P_{0})^\vee\ar[r]^(.65){\partial_1^\vee}&\cdots\ar[r]^(.3){\partial_{\goth f-\goth g-1}^\vee}&(P_{\goth f-\goth g-1})^\vee\ar[r]^{\partial_{\goth f-\goth g}^\vee}&(P_{\goth f-\goth g})^\vee\ar[r]^(.65){\partial_{\goth f-\goth g+1}^\vee}&\cdots\ ,}$$
and let $T$ be the (formal) mapping cone of the picture $\a$. In other words, $T$ is the collection of maps
$$ T:\quad
 \cdots \xrightarrow{\tau_2} T_1\xrightarrow{\tau_1} T_0 \xrightarrow{\tau_0} T_{-1}\xrightarrow{\tau_{-1}} \cdots,$$
where $$T_i= \begin{matrix}P_i\t_R\bigwedge^{\gog}G\\\p\\P_{\ff-\gog-1-i}^\vee\end{matrix}\quad\text{and}\quad \tau_i=\bmatrix \partial_i&0\\\alpha_i&-\partial_{\ff-\gog-i}^\vee\endbmatrix,$$ for each integer $i$.
\end{definition}
\begin{remark-no-advance}\label{Define T*} It is not difficult to see that $\alpha$ is a map of complexes and that $\im \tau_i$ is contained in $\m T_{i-1}$ for each $i$.  It follows that  $T$ is a complex of free $R$-modules. We call $T$ the {\it  minimal two-step complete Tate complex associated to the data $(\xi,\phi)$.}\end{remark-no-advance}

\section{The  complete resolution associated to a pair of nested complete intersections.}\label{nested}
The main result in this section is Proposition~\ref{4.2}. 
 We prove that 
if $A\subseteq B$ is a pair of complete intersection ideals in the local ring $(\Ring,\M)$, with $A\subseteq \M B$, then the  ``minimal two-step complete Tate complex'' of Section~\ref{complete Tate}, for the ideal $B/A$ of the ring $\Ring/A$, is exact.

\begin{data}\label{dataJun8}Let $(\Ring,\M)$ be a local ring and $A\subseteq B$ be ideals in $\Ring$, with $A\subseteq \M B$. Assume that each of the ideals $A$ and $B$  is generated by a regular sequence. 
Let  $\goth g=\grade A$,  $\goth f=\grade B$,  
$\Fdagger$ and $\Gdagger$ be free $\Ring$-modules of rank $\goth f$ and $\goth g$, respectively, and  $\Xi:\Fdagger\to \Ring$ and $\Phi:\Gdagger\to \Fdagger$ be $\Ring$-module homomorphisms with $\im \Xi=B$ and $\im (\Xi\circ \Phi)=A$.
Let $\overline{\phantom{x}}$ represent the functor $(\Ring/A)\t_\Ring-$
and $(-)^\vee$ represent the functor $\Hom_{\Rbar}(-,\bigwedge^{\ff}\Fbar)$. 
 \end{data}

\begin{chunk}\label{Jul24}Adopt Data~\ref{dataJun8}. Let $\Pdagger$ be the two-step Tate complex associated to $(\Xibar,\ov{\Phi})$ as given in \ref{two-step}.  Tate \cite[Thm.~4]{T57} proved that $\Pdagger$ is a minimal resolution of 
$\Ring/B$ by free $\Ring/A$-modules. 
Define the map of complexes $\ts \a:\Pdagger\t_{\Rbar} \bigwedge^{\gog}\Gbar\to 
\Pdagger^{\vee}[\goth g-\goth f]$: 
$$\xymatrix{
\cdots\ar[r]^(.35){\partial_{\goth f-\goth g+1}}&\Pdagger_{\goth f-\goth g}\t_{\Rbar} \bigwedge^{\gog}\Gbar\ar[r]^(.65){\partial_{\goth f-\goth g}}\ar[d]^{\alpha_{\goth f-\goth g}}&\cdots\ar[r]^(.3){\partial_2}&\Pdagger_1\t_{\Rbar} \bigwedge^{\gog}\Gbar\ar[r]^{\partial_1}\ar[d]^{\alpha_1}&\Pdagger_0\t_{\Rbar}\bigwedge^{\gog}\Gbar\ar[r]\ar[d]^{\alpha_0}&0\\
0\ar[r]&(\Pdagger_{0})^\vee\ar[r]^(.65){\partial_1^\vee}&\cdots\ar[r]^(.3){\partial_{\goth f-\goth g-1}^\vee}&(\Pdagger_{\goth f-\goth g-1})^\vee\ar[r]^{\partial_{\goth f-\goth g}^\vee}&(\Pdagger_{\goth f-\goth g})^\vee\ar[r]^(.65){\partial_{\goth f-\goth g+1}^\vee}&\cdots\ ,}$$ exactly as was done in \ref{Define T}. The mapping cone $\Tdagger$ of $\a$ is the 
 minimal two-step complete Tate complex associated to the data $(\ov{\Xi},\ov{\Phi})$ as described in \ref{Define T}. \end{chunk}

\begin{remark-no-advance}The data of Section~\ref{nested} is analogous to  the data of Section~\ref{complete Tate} in the sense of \ref{emphasize}. In particular, $\mu(B/A)=\ff$, $\mu(K_1(B/A))=\gog$, and the homomorphisms $ \Xibar: \Fbar \to \Rbar$ and $\ov{\Gdagger} \to \HH_1(\bigwedge^\bullet \Fbar)$ are minimal. The inclusion $A\subseteq B$ guarantees that $\goth g\le \goth f$ because $\grade A$ and $\grade B$ are also equal to $\gog$ and $\ff$, respectively.
\end{remark-no-advance}

\begin{proposition}\label{4.2} Adopt the data of {\rm\ref{dataJun8}}. 
 Then the complex $\Tdagger$ of {\rm\ref{Jul24}} is the minimal complete resolution of $\Ring/B$ by free $\Rbar$-modules. 
\end{proposition}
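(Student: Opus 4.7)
The plan is to verify that $\Tdagger$ satisfies the five defining properties of a minimal complete resolution of $\Ring/B$ by free $\Rbar$-modules: (i) $\Tdagger$ is a complex of finitely generated free $\Rbar$-modules; (ii) $\Tdagger_{\ge r}$ agrees with a free resolution of $\Ring/B$ for some $r$; (iii) $\HH_i(\Tdagger)=0$ for all $i$; (iv) $\HH^i(\Tdagger^{\ast})=0$ for all $i$, where $(-)^{\ast}=\Hom_{\Rbar}(-,\Rbar)$; and (v) the differential sends $\Tdagger$ into $\ov{\M}\Tdagger$. Uniqueness of the minimal complete resolution \cite[Thm.~8.4]{AM02} then identifies $\Tdagger$ as the one.

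Properties (i), (ii), and (v) follow quickly from the construction. Each $\Tdagger_i$ is a finite direct sum of finitely generated free $\Rbar$-modules. The summand $\Pdagger^{\vee}_{\ff-\gog-1-i}$ vanishes for $i\ge \ff-\gog$, so $\Tdagger_{\ge \ff-\gog}$ coincides, up to the rank-one free twist $\bigwedge^{\gog}\Gbar$, with the truncation of Tate's minimal free resolution $\Pdagger$ of $\Ring/B$ over $\Rbar$ \cite[Thm.~4]{T57}. For minimality, the $\partial$-component of $\tau$ is minimal because $\Pdagger$ is, and the $\alpha$-component is minimal because the hypothesis $A\subseteq \M B$ forces $\ov{\Phi}(\Gbar)\subseteq \ov{\M}\Fbar$: any Koszul relation $\sum c_i \bar b_i=0$ in $\Rbar$ lifts to $\sum c_i b_i=a\in A\subseteq \M B$ in $\Ring$, so comparing coefficients in a minimal generating set for $B$ yields $c_i\in \M$. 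Hence $(\bigwedge^{\gog}\ov{\Phi})(\omega_{\gog})\in \ov{\M}\bigwedge^{\gog}\Fbar$ (using $\gog\ge 1$), and wedging with this element keeps the image of $\alpha$ inside $\ov{\M}\cdot \Pdagger^{\vee}$.

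The heart of the proof is condition (iii). Since $\Tdagger$ is the mapping cone of $\alpha\colon \Pdagger\otimes \bigwedge^{\gog}\Gbar\to \Pdagger^{\vee}[\gog-\ff]$, acyclicity is equivalent to $\alpha$ being a quasi-isomorphism. The source has homology concentrated in degree $0$, equal to $\Ring/B\otimes \bigwedge^{\gog}\Gbar$. The target has homology in degree $i$ equal to $\Ext^{\ff-\gog-i}_{\Rbar}(\Ring/B,\bigwedge^{\ff}\Fbar)\cong \Ext^{\ff-\gog-i}_{\Rbar}(\Ring/B,\Rbar)$. It therefore suffices to prove that $\Ext^{j}_{\Rbar}(\Ring/B,\Rbar)$ is concentrated in degree $\ff-\gog$, is free of rank one over $\Ring/B$ there, and that $\alpha_{\ast}$ realizes this isomorphism on $\HH_0$. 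Vanishing for $j>\ff-\gog$ comes from Auslander--Bridger: the quasi-deformation $\Rbar\xrightarrow{\mathrm{id}}\Rbar\leftarrow \Ring$ of codimension $\gog$ together with $\pd_{\Ring}(\Ring/B)=\ff$ gives $\CIdim_{\Rbar}(\Ring/B)=\ff-\gog$ by \ref{2.3.1}, hence $\Gdim_{\Rbar}(\Ring/B)=\ff-\gog$ by \ref{2.2.2}. Vanishing for $j<\ff-\gog$ follows from the grade identity $\grade(B/A,\Rbar)=\ff-\gog$, which is a consequence of the depth computations $\depth \Rbar=\depth \Ring-\gog$ and $\depth_{\Rbar}(\Ring/B)=\depth \Ring-\ff$ via Auslander--Buchsbaum together with the Auslander--Bridger depth equality. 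The identification of the top $\Ext$ with $\Ring/B$, and the fact that $\alpha_\ast$ realizes it, reduce to observing that $(\bigwedge^{\gog}\ov{\Phi})(\omega_{\gog})$ represents a generator of the top Koszul homology of $\bigwedge^{\bullet}\Fbar$, which is Poincar\'e-dual to a generator of $\Ext^{\ff-\gog}_{\Rbar}(\Ring/B,\Rbar)$.

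Condition (iv) is established by a parallel argument, applying the mapping-cone reasoning to the $\Rbar$-dual $\Tdagger^{\ast}$. The complex $\Tdagger^{\ast}$ is again a shifted mapping cone, and its acyclicity reduces to $\alpha^{\ast}$ being a quasi-isomorphism, which follows from the same Ext vanishing and identification after swapping the roles of $\Pdagger$ and $\Pdagger^{\vee}$. The main obstacle in the plan is the $\Ext$ computation---establishing both the Ext concentration in degree $\ff-\gog$ and the precise identification of the top Ext with $\Ring/B$---after which the mapping-cone long exact sequences force (iii) and (iv), completing the verification.
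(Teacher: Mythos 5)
Your overall plan tracks the paper's: reduce acyclicity of $\Tdagger$ to the statement that $\a$ is a quasi-isomorphism, which in turn reduces to computing $\HH^\bullet(\Pdagger^\vee)\cong\Ext^\bullet_{\Rbar}(\Ring/B,\Rbar)$ and checking that $\a$ hits the top class. Your choice to re-derive the concentration of this Ext in degree $\ff-\gog$ from CI-dimension and grade, rather than cite \cite[2.5(4)]{AHS13} as the paper does, is a legitimate alternate route, and the quasi-deformation $\Rbar\xrightarrow{\mathrm{id}}\Rbar\leftarrow\Ring$ is the right one for it. (A small cleanup: \ref{2.3.1} gives only $\CIdim_\Rbar(\Ring/B)\le\ff-\gog$; you then need \ref{2.2.2} to pin it down, and the grade equality $\grade(B/A,\Rbar)=\ff-\gog$ is cleanest via the Rees grade-shifting lemma for the regular sequence generating $A$, not via Auslander--Buchsbaum.)

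The genuine gap is in the sentence where you claim that the identification of the top Ext with $\Ring/B$ and the fact that $\a_*$ realizes it ``reduce to observing'' that $(\bigwedge^{\gog}\phibar)(\omega_{\gog})$ generates $\HH_{\gog}(\bigwedge^\bullet\Fbar)$ and is ``Poincar\'e-dual'' to a generator of $\Ext^{\ff-\gog}_{\Rbar}(\Ring/B,\Rbar)$. Koszul self-duality lives entirely inside the subcomplex $\bigwedge^\bullet\Fbar$ of $\Pdagger$; it identifies $\HH_\gog(\bigwedge^\bullet\Fbar)$ with $\HH^{\ff-\gog}\bigl((\bigwedge^\bullet\Fbar)^\vee\bigr)$, not with $\HH^{\ff-\gog}(\Pdagger^\vee)$. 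Since $\bigwedge^\bullet\Fbar$ is \emph{not} a resolution of $\Ring/B$ over $\Rbar$ (it fails to be acyclic precisely because $B/A$ is only a \qci\!, not a complete intersection), the Ext you need is computed from $\Pdagger^\vee$, and you have not explained why the class of $\a_0(\omega_{\gog})=\eta(\omega_{\gog}\otimes-)$ generates $\HH^{\ff-\gog}(\Pdagger^\vee)$ rather than merely mapping to a generator of $\HH^{\ff-\gog}\bigl((\bigwedge^\bullet\Fbar)^\vee\bigr)$ under the quotient map $\Pdagger^\vee\to(\bigwedge^\bullet\Fbar)^\vee$. This is precisely where the paper's proof does its real work: a cocycle $c_{\ff-\gog}=\sum_{2p+q=\ff-\gog}\zeta_{p,q}$ in $Z^{\ff-\gog}(\Pdagger^\vee)$ has its leading Koszul component $\zeta_{0,\ff-\gog}$ controlled by the duality, and then an iteration kills the remaining $\zeta_{p,q}$ with $p\ge 1$ modulo coboundaries using the already-established vanishing in lower cohomological degrees. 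Without some substitute for that iteration (or a change-of-rings argument identifying $\Ext^{\ff-\gog}_{\Rbar}(\Ring/B,\Rbar)$ with $\Ext^{\ff}_{\Ring}(\Ring/B,\Ring)$ together with a comparison of the canonical generators through the quasi-isomorphism from the Koszul resolution over $\Ring$ to $\Pdagger$ over $\Rbar$), your proof does not close. Finally, your separate ``parallel argument'' for $\HH^i(\Tdagger^*)=0$ is unnecessary: the construction makes $\Tdagger^*$ isomorphic, up to shift and a rank-one free twist, to $\Tdagger$ itself, so the dual acyclicity is automatic once acyclicity is known.
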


Definition~\ref{eta} is used in our proof of  Proposition~\ref{4.2}.
\begin{definition}\label{eta} Retain the notation of \ref{dataJun8} and \ref{Jul24}. Let $\eta\in (\bigwedge^{\gog}\Gbar\t_{\Rbar}\Pdagger_{\ff-\gog})^\vee$ be the following homomorphism. The restriction of
$\eta$ 
 to 
$$\ts \sum\limits _{\gf{p+2q=\ff-\gog}{1\le q}}\ts \bigwedge^{\gog}\Gbar\t_{\Rbar}\bigwedge ^{p}\Fbar\t_{\Rbar}D_q\Gbar\to \bigwedge^{\ff}\Fbar$$ is equal to zero and the restriction
$\eta$  
to $\bigwedge^{\gog}\Gbar\t_{\Rbar}\bigwedge ^{\ff-\gog}\Fbar\t_{\Rbar}D_0\Gbar$
is given by 
$$\ts\eta(\omega_{\gog}\t \theta_{\ff-\gog})=(\bigwedge^{\gog}\phibar)(\omega_{\gog})\w \theta_{\ff-\gog},$$
for $\omega_{\gog}\in \bigwedge^{\gog}\Gbar$ and $\theta_{\ff-\gog}\in \bigwedge ^{\ff-\gog}\Fbar$.
\end{definition}

\medskip\noindent{\it Proof of Proposition~{\rm\ref{4.2}}.}
 Fix a generator $\omega_{\gog}$ of $\bigwedge^{\gog}\Gbar$. Observe that
$$\ts\a_0(\omega_{\gog})
=\eta(\omega_{\gog}\t -)
\in (\Pdagger_{\ff-\gog})^\vee;$$furthermore, $\eta(\omega_{\gog}\t -)$ is a co-cycle  in the complex $\Pdagger^\vee$.
It suffices to prove that
\begin{equation}\label{Jun15}\text{$\HH^{\ff -\gog}(\Pdagger^\vee)$
is 
generated by $\eta(\omega_g\t -)$.}\end{equation}
Indeed, 
it is shown in 
 \cite[2.5(4)]{AHS13}
that 
$$\HH^i(\Pdagger^\vee)\cong\begin{cases} 0&\text{if $i\neq\ff -\gog$, and}\\
\Ring/B&\text{if $i=\ff-\gog$}.\end{cases}$$
Thus, 
once  (\ref{Jun15}) is established, then 
the map of complexes $\a$ of {\rm\ref{Jul24}} is a quasi-isomorphism and the proof is complete.
We prove (\ref{Jun15}).  
Let $c_{\ff-\gog}$ be a  co-cycle in  $Z^{\ff-\gog}(\Pdagger^\vee)$. It follows that   $$\ts c_{\ff-\gog}=\sum\limits_{2p+q=\ff-\gog}\zeta_{p,q}\ ,$$with $\zeta_{p,q}\in (
D_p\Gbar\t_{\Rbar}\bigwedge^q\Fbar)^\vee$, and 
$$\ts \phibar^\vee(\zeta_{p,q})+\Xi^\vee(\zeta_{p+1,q-2})=0\quad\text{in}\quad (D_{p+1}\Gbar\t_{\Rbar}\bigwedge^{q-1}\Fbar)^\vee,$$ for all $(p,q)$ with $2p+q=\ff-\gog$. 

We first show 
that $\zeta_{0,\ff-\gog}$ is a scalar multiple of $\eta(\omega_\gog \t -)$. 

The natural quotient map $\Ring/A \to \Ring/B$ is a \qci homomorphism; see \cite[1.4]{AHS13}.
Hence, 
the $\Rbar$-module homomorphism $\phibar:\Gbar\to\Fbar$ induces an isomorphism of exterior algebras $\bigwedge^\bullet_{\Ring/B}(\Gdagger\t_\Ring\Ring/B) \to \HH_\bullet(\bigwedge^\bullet_{\Rbar}\Fbar)$. In particular, the class of the cycle $(\bigwedge^\gog\phibar)(\omega_{\gog})$ in $Z_{\gog}(\bigwedge^{\bullet}\Fbar)$ generates the homology module $\HH_{\gog}(\bigwedge^{\bullet}\Fbar)$. On the other hand, there is an isomorphism of complexes $$\ts(\bigwedge^\bullet \Fbar,\Xibar)\xrightarrow{\cong}  \big((\bigwedge^\bullet \Fbar)^\vee,\Xibar^\vee)\big)[-\ff],$$ which is induced by the map which sends 
$\theta_i$ in $\bigwedge^i\Fbar$  to $\pm \theta_i\w -$ in $(\bigwedge^{\ff-i}\Fbar)^\vee$. It follows that 
\begin{equation}\label{Jun15.2} \HH^i(\Pdagger^\vee)=0\quad\text{for $0\le i\le \ff-\gog-1$, and}
\end{equation}
the cohomology class of the co-cycle $(\bigwedge^\gog\phibar)(\omega_{\gog})\w -$ in $Z^{\ff-\gog}\big((\bigwedge^{\bullet}\Fbar)^\vee\big)$ generates the cohomology module $\HH^{\ff-\gog}\big((\bigwedge^{\bullet}\Fbar)^\vee\big)$. Thus, $\zeta_{0,\ff-\gog}=\lambda \eta(\omega_\gog\t -)$, for some $\lambda\in \Rbar$. We prove that 
$c_{\ff-\gog}-\lambda \eta(\omega_\gog\t -)$, which is equal to 
$$\sum\limits_{\gf{2p+q=\ff-\gog}{1\le p}}\zeta_{p,q},$$ is a boundary in $(\Pdagger^\vee)_{\ff-\gog}$. 

Let $p_0$ be the smallest index with $\zeta_{p_0,q_0}\neq 0$. Observe that $\zeta_{p_0,q_0}$ is a co-cycle in the complex 
\begin{equation}
\notag\ts 
(D_{p_0}\Gbar\t_{\Rbar}\bigwedge^{q_0-1}\Fbar)^\vee\xrightarrow{\Xibar^\vee}  
(D_{p_0}\Gbar\t_{\Rbar}\bigwedge^{q_0}\Fbar)^\vee\xrightarrow{\Xibar^\vee} (
D_{p_0}\Gbar\t_{\Rbar}\bigwedge^{q_0+1}\Fbar)^\vee.\end{equation}
This complex is exact because $\HH_i(\Pdagger^\vee)=0$ for $0\le i\le \ff-\gog-1$ 
(see, for example, (\ref{Jun15.2})) and $q_0=\ff-\gog-2p_0\le \ff-\gog-2$. 
Thus, $c_{\ff-\gog}-\lambda(\omega_\gog\t -)$ is congruent, mod $B^{\ff-\gog}(\Pdagger^\vee)$, to 
$$\sum\limits_{\gf{2p+q=\ff-\gog}{p_0'+1\le p}}\zeta_{p,q}',$$ for some $\zeta_{p,q}'\in (D_p\Gbar\t \bigwedge^q\Fbar)^\vee$. Iterate this procedure to conclude that $c_{\ff-\gog}$ and $\lambda\eta(\omega_\gog\t-)$ represent the same class in $\HH^{\ff-\gog}(\Pdagger^\vee)$. This completes the proof of (\ref{Jun15}). \hfil \qed

\section{The generic Tate construction.}\label{generic-data}

Given an (almost arbitrary) ideal $I$ in a local ring $R$, we produce a generic pair of nested complete intersection ideals that can be used in Theorem~\ref{main} to determine if $I$ is a \qci. There is a small restriction imposed on $I$; it must satisfy $$\mu(K_1(I))\le \mu(I),$$ where $K_1(I)$ is the first Koszul homology associated to a minimal generating set for $I$, as described in
\ref{2.1.9}.
 This hypothesis is benign, in the situation of interest, because if $I$ is a \qci, then $$\grade I=\mu(I)-\mu(K_1(I));$$ see \cite[Lem.~1.2]{AHS13}, and of course $\grade I$ is always non-negative.

\begin{data}\label{StandardData2}
Let $(R,\m)$ be a local ring,
$I$ be a proper 
 ideal of $R$ which is minimally generated by $\ff$ elements, $F$ be a 
 free $R$-module of rank $\ff$, and $\done:F\to R$ be a  
 $R$-module homomorphism with $\im (\done)=I$. 
Let $\bigwedge^\bullet F$ be the Koszul complex associated to $\done$, $\gog$ be the minimal number of generators of $\HH_1(\bigwedge^\bullet F)$, $G$ be a 
free $R$-module of rank $\gog$, and $$\ts\blop:G\to Z_1 (\bigwedge^\bullet F)$$ be an 
 $R$-module homomorphism with the property that the composition
$$\ts G\xrightarrow{\blop}Z_1 (\bigwedge^\bullet F)\xrightarrow{\text{natural quotient map}}H_1 (\bigwedge^\bullet F)$$ is a  
 surjection.
Assume $\gog\le \ff$.\end{data}

\begin{construction}\label{X3.4}Begin with the data of \ref{StandardData2}. Consider
the 
polynomial ring
 $$\Sym^R_\bullet (X_1\p X_2),$$ 
where $X_1$ and $X_2$ are the 
free $R$-modules $$X_1=F\quad\text{and}\quad X_2=F^*\t_R G.$$
Let $\M$ be the maximal ideal $$\M=\m+\sum_{1\le i}\Sym_i^R(X_1\p X_2)$$ of $\Sym^R_\bullet (X_1\p X_2)$; $\Rtil$ be the local ring $\big(\Sym^R_\bullet (X_1\p X_2)\big)_{\M}$; and $\widetilde{\phantom{a}}$ be the functor $-\t_R \Rtil$.
Define $$\Xi:\Ftil \to \Rtil\quad\text{and}\quad
\Phi:\Gtil \to\Ftil 
 $$ to be the 
 compositions
$$\Ftil=F \t_R \Rtil =X_1 \t_R \Rtil\xrightarrow{\mult}\Rtil$$
and 
\begin{align*}\Gtil=G \t_R \Rtil\xrightarrow{\ev^*\t 1\t 1}
F\t F^*\t G\t \Rtil
=F\t X_2 \t \Rtil\xrightarrow{1\t \mult}
F\t \Rtil =\Ftil, 
\end{align*} respectively.
  Define $\rho:\Rtil\to R$ to be the 
 $R$-algebra homomorphism induced
by 
\begin{align*}\rho(\theta_1)&=\done(\theta_1),&&&&\text{for $\theta_1\in F=X_1$, and}\\
\rho(\Theta_1\t g)&=(\Theta_1\circ \blop)(g),&&&&\text{for $\Theta_1\t g\in F^*\t G=X_2$.}
\end{align*} \end{construction}
\begin{Remark} Observe that $\rho(\M)\subseteq \m$. Indeed, 
$$\ts \rho(X_1)\subseteq \im(\done)\subseteq I\subseteq \m\quad\text{and}\quad \rho(X_2)\subseteq I_1\big(Z_1(\bigwedge^\bullet F)\big)\subseteq \m.$$
The final inclusion holds because the rank of $F$ is the minimal number of generators of $I$; hence, $\done:F\to R$ is a minimal homomorphism.
\end{Remark}

\begin{proposition}\label{Jun17-843}Given the data  
of  {\rm\ref{StandardData2}}, apply Construction~{\rm\ref{X3.4}} to produce $\Rtil$, $\Xi$, $\Phi$, and $\rho$. Let $B$ be  the image of $\Xi$ in $\Rtil$ and  $A$ be the image of $\Xi\circ \Phi$ in $\Rtil$.
The following statements hold{\rm:}
\begin{enumerate}[\rm(a)]
\item\label{X3.1.b} $B$ is generated by a regular sequence on $\Rtil$ of length $\ff$,
\item\label{X3.1.a} $A$ is generated by a regular sequence on $\Rtil$ of length $\gog$,
\item\label{X3.1.f} $BR=\im \done$,
\item\label{X3.1.g} $AR=0$,
\item\label{4.3.e} the $\Rtil$-module homomorphisms $\Xi:\Ftil\to \Rtil$ and $\Phi:\Gtil\to \Ftil$ are minimal,
\item\label{4.3.f} the $R$-module homomorphism $
\Xi\t 1:
\Ftil\t_{\Rtil}R\to \Rtil\t_{\Rtil}R$ is equal to $\xi:F\to R$,  and
\item\label{4.3.g} the $R$-module homomorphism $\Phi\t 1:
\Gtil\t_{\Rtil}R\to \Ftil\t_{\Rtil}R$ is equal to $\phi:G\to F$.
\end{enumerate}\end{proposition}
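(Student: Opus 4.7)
The plan is to unpack Construction~\ref{X3.4} in explicit coordinates and verify each assertion by direct computation; the substantive work is concentrated in~(\ref{X3.1.a}).

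Fix bases $\{\epsilon_i\}_{1\le i\le \ff}$ of $F$, $\{\epsilon_i^*\}$ of $F^*$, and $\{\eta_j\}_{1\le j\le \gog}$ of $G$, and set $f_i:=\epsilon_i\in X_1$ and $y_{ij}:=\epsilon_i^*\t \eta_j\in X_2$. Then $\Sym^R_\bullet(X_1\p X_2)=R[\{f_i\},\{y_{ij}\}]$ is a polynomial ring and $\Rtil$ is its localization at $\M=\m+(\{f_i\},\{y_{ij}\})$. Unwinding the definitions gives $\Xi(\epsilon_i\t 1)=f_i$ and $\Phi(\eta_j\t 1)=\sum_i y_{ij}(\epsilon_i\t 1)$, so $\Xi$ is the $\ff$-tuple $(f_1,\dots,f_\ff)$ acting on $\Ftil$ and $\Phi$ is the $\ff\times\gog$ matrix $Y=(y_{ij})$; and $\rho$ sends $f_i\mapsto\done(\epsilon_i)$ and $y_{ij}\mapsto\epsilon_i^*(\blop(\eta_j))$ (the $i$-th coordinate of $\blop(\eta_j)$). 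Thus $B=(f_1,\dots,f_\ff)\Rtil$ and $A=(u_1,\dots,u_\gog)\Rtil$ where $u_j:=\sum_i y_{ij}f_i$.

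With this dictionary the routine parts fall out. Base-changing $\Xi$ along $\rho$ sends $\epsilon_i\mapsto\rho(f_i)=\done(\epsilon_i)$, giving~(\ref{4.3.f}); base-changing $\Phi$ sends $\eta_j\mapsto\sum_i\rho(y_{ij})\epsilon_i=\blop(\eta_j)$, giving~(\ref{4.3.g}). Then~(\ref{X3.1.f}) follows because $BR$ is generated by the elements $\done(\epsilon_i)$; and~(\ref{X3.1.g}) follows because each generator of $AR$ equals $\rho(u_j)=\done(\blop(\eta_j))$, which vanishes since $\blop(\eta_j)\in Z_1(\bigwedge^\bullet F)=\ker\done$. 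Part~(\ref{X3.1.b}) is also immediate: the $f_i$ are polynomial indeterminates of $\Rtil$ over $R$, so $f_1,\dots,f_\ff$ is a regular sequence and $\Rtil/B$ is the localization of $R[\{y_{ij}\}]$ at a maximal ideal.

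The heart of the argument is~(\ref{X3.1.a}): the $\gog$ generic $R[\{y_{ij}\}]$-linear combinations $u_j$ of the regular sequence $f_1,\dots,f_\ff$ should themselves form a regular sequence on $\Rtil$. I would proceed by observing that the top-left minor $\Delta:=\det(y_{ij})_{i,j\le\gog}$ has a monomial coefficient $\pm 1$, so by McCoy's theorem $\Delta$ is a non-zero divisor on $\Rtil$; over $\Rtil[\Delta^{-1}]$ the substitution $(f_1,\dots,f_\gog)\leftrightarrow(u_1,\dots,u_\gog)$ is an invertible linear change of variables, exhibiting $(u_1,\dots,u_\gog,f_{\gog+1},\dots,f_\ff)$ as a regular sequence after inverting $\Delta$. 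A grade/local-to-global argument---using the Eagon--Northcott grade bound $\grade I_\gog(Y)\ge\ff-\gog+1$ for a generic $\ff\times\gog$ matrix and comparing depths---then promotes regularity of $(u_1,\dots,u_\gog)$ from $\Rtil[\Delta^{-1}]$ to $\Rtil$; alternatively, one can induct on $\gog$, peeling off one $u_j$ at a time as a degree-one polynomial in $y_{jj}$. Once~(\ref{X3.1.a}) is in hand,~(\ref{4.3.e}) drops out: $\ker\Xi$ is the first-syzygy module on the regular sequence $f_1,\dots,f_\ff$, so it is generated by Koszul relations with entries in $B\subseteq\M$, giving $\ker\Xi\subseteq\M\Ftil$; and Cramer's rule applied to the top $\gog$ rows of $Y$ yields $\Delta\cdot\ker\Phi=0$, forcing $\ker\Phi=0\subseteq\M\Gtil$. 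The main obstacle is thus~(\ref{X3.1.a}): while the intuition that generic linear combinations of a regular sequence remain regular is classical, globalizing regularity from $\Rtil[\Delta^{-1}]$ back to $\Rtil$ itself is the only place requiring real care.
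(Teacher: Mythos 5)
Your handling of parts (a), (c), (d), (f), and (g) is correct and in substance identical to the paper's one- or two-line verifications; your Cramer's-rule argument for $\ker\Phi=0$ in part~(e) is a slick variant of what the paper does (it shows the matrix entries of $\Xi$ and $\Phi$ form a regular sequence and reads off minimality from the Koszul syzygies). The substantive difference is part~(b), and there is a genuine gap there.

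The paper does \emph{not} prove (\ref{X3.1.a}) from scratch: it invokes Nagata \cite[6.13]{N62}, which asserts precisely that when $\mathbf b$ is a $1\times\ff$ row of indeterminates, $\mathbf C$ is an $\ff\times\gog$ matrix of further indeterminates with $\gog\le\ff$, the entries of $\mathbf b\mathbf C$ form a regular sequence. You instead try to prove this. Your first two steps (McCoy to get that $\Delta$ is a non-zerodivisor; the change of variables over $\Rtil[\Delta^{-1}]$) are fine, but the passage from regularity over $\Rtil[\Delta^{-1}]$ to regularity over $\Rtil$ is exactly where the difficulty lies, and the grade bound $\grade I_{\gog}(Y)\ge \ff-\gog+1$ does not close it. Regularity of a sequence is not detectable by comparing codimensions unless the ambient ring is Cohen--Macaulay, and $\Rtil$ is a localized polynomial ring over an \emph{arbitrary} local $R$ and so need not be Cohen--Macaulay; in particular $\Rtil$ may have embedded associated primes, and the argument "the bad locus $V(\Delta)$ is small" does not rule out those primes containing $\Delta$ and some $(u_1,\dots,u_{j-1})$-residue of a zerodivisor. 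The fallback you mention (inducting, peeling $u_j$ off via McCoy in a single variable $y_{jj}$ or in the column $y_{1j},\dots,y_{\ff j}$) reduces to showing that $\ann_{S/(u_1,\dots,u_{j-1})}(f_1,\dots,f_\ff)=0$, i.e.\ that some $f_i$ stays a non-zerodivisor modulo the earlier $u$'s; but that is a statement of the same difficulty as the one you are trying to prove, so as written the induction does not close.

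Two ways to repair it. The quickest is simply to cite \cite[6.13]{N62} as the paper does. Alternatively, a self-contained proof goes through flatness over $R$ rather than through localization at $\Delta$: set $S=R[\{f_i\},\{y_{ij}\}]$ and prove by induction that $S/(u_1,\dots,u_j)$ is $R$-flat and that $u_{j+1}$ is a non-zerodivisor on each fibre $(S/(u_1,\dots,u_j))\otimes_R k(\pp)$, the latter being a statement over a field where Cohen--Macaulayness of the fibre ring is available and the codimension computation you have in mind is legitimate; the local criterion for flatness (Bourbaki/Matsumura) then yields that $u_{j+1}$ is a non-zerodivisor on $S/(u_1,\dots,u_j)$ and that $S/(u_1,\dots,u_{j+1})$ is again $R$-flat. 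This is a genuinely different route from the paper's citation, but it is the kind of argument that actually makes the ``globalize'' step rigorous. Everything else in your write-up is sound.
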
\begin{Remark}
The homomorphism $\rho$ makes $R$   an $\Rtil$-algebra; this $\Rtil$-algebra structure on $R$ is used in (\ref{X3.1.f}), (\ref{X3.1.g}), (\ref{4.3.f}), and (\ref{4.3.g}).
\end{Remark}

\begin{proof} {\bf(\ref{X3.1.b})} The ideal $B$ is generated by $\ff$ distinct indeterminates; these generators form a regular sequence.

\smallskip\noindent {\bf (\ref{X3.1.a})} The ideal $A$ is generated by the entries of the product $\mathbf b\mathbf C$, where $\mathbf b$ and $\mathbf C$ are matrices of distinct indeterminates, $\mathbf b$ has shape $1\times \ff$, $\mathbf C$ has shape $\ff\times \gog$, and $\gog\le \ff$. These generators form a regular sequence; see, for example, \cite[6.13]{N62}.

\smallskip\noindent {\bf (\ref{X3.1.f})} The ideal $B$ of $\Rtil$ is generated by $\Xi(F)=X_1$; so, $\rho(B)$ is generated by $$(\rho\circ \Xi)(\Ftil)=\done(F).$$

\smallskip\noindent {\bf (\ref{X3.1.g})} Let $(\{x_i\},
\{x_i^*\})$ be a pair of dual bases for $F$ and $F^*$, respectively. The ideal
$A$ of $\Rtil$ is generated by 
$$\Big\{ \sum_ix_i(x_i^*\t g)\in X_1\cdot X_2\subseteq \Rtil\ \Big\vert \ g\in G\Big\}.$$It follows that $\rho(A)$ is generated by $$\sum_i \done(x_i)\cdot x_i^*(\blop(g))=(\done\circ \blop)(g)\in \done\big( Z_1(\ts \bigwedge^\bullet F,\done)\big)=0.$$

\smallskip\noindent {\bf (\ref{4.3.e})} If one expresses either of these maps as a matrix, then the entries of this matrix form a regular sequence. It follows that the kernel of the map is in $\M$.

\smallskip\noindent {\bf (\ref{4.3.f}) and (\ref{4.3.g})} The composition
$$R\xrightarrow{\text{inclusion}} \Rtil \xrightarrow{\rho}R$$ is the identity map. It follows that $\Rtil\t_{\Rtil}R\cong R$. The assertions are now obvious. 
\end{proof}

\section{The main theorem.}

Data~\ref{5.1} has 3 parts. Part (\ref{5.1.a}) concerns an ideal $I$ in a local ring $R$; this part of the data is exactly the same as Data~\ref{UniformData}, except that hypothesis~\ref{HYP} has now been added. Part (\ref{5.1.b})  is about a pair of nested complete intersection ideals $A\subseteq B$ in a local ring $\Ring$. Finally, part (\ref{5.1.c}) is about   a surjection $\rho:\Ring\to R$ which carries $A$ to $0$ and $B$ to $I$. Proposition~\ref{Jun17-843} guarantees that for every ideal $I$ which satisfies the hypotheses of (\ref{5.1.a}), the rest of Data~\ref{5.1} can be created generically. The subsequent results in the paper may be applied to the generic data built in  Proposition~\ref{Jun17-843} or any other data which satisfies the hypotheses of Data~\ref{5.1}.
\begin{data}\label{5.1} \begin{enumerate}[\rm(a)]
\item\label{5.1.a}
Let $(R,\m)$ be a local ring,
$I$ be a proper 
 ideal of $R$ which is minimally generated by $\ff$ elements, $F$ be a 
 free $R$-module of rank $\ff$, and $\done:F\to R$ be an  
 $R$-module homomorphism with $\im (\done)=I$. 
Let $\bigwedge^\bullet F$ be the Koszul complex associated to $\done$, $\gog$ be the minimal number of generators of $\HH_1(\bigwedge^\bullet F)$, $G$ be a 
free $R$-module of rank $\gog$, and $$\ts\blop:G\to Z_1 (\bigwedge^\bullet F)$$ be an 
 $R$-module homomorphism with the property that the composition
\begin{equation}\label{composition*}\ts G\xrightarrow{\blop}Z_1 (\bigwedge^\bullet F)\xrightarrow{\text{natural quotient map}}H_1 (\bigwedge^\bullet F)\end{equation} is a  
 surjection.
Assume 
\begin{equation}\label{HYP}0\le \ff-\gog\le \grade I.\end{equation}

\item\label{5.1.b}Let $\Ring$ be a local ring and $A\subseteq B$ be ideals in $\Ring$, each of which is generated by a regular sequence. 
Let  $\goth g=\grade A$,  $\goth f=\grade B$,  
$\Fdagger$ and $\Gdagger$ be free $\Ring$-modules of rank $\goth f$ and $\goth g$, respectively, and  $\Xi:\Fdagger\to \Ring$ and $\Phi:\Gdagger\to \Fdagger$ be minimal $\Ring$-module homomorphisms with $\im \Xi=B$ and $\im (\Xi\circ \Phi)=A$. Let $\Pdagger$ and $\Tdagger$ be the Tate resolution and the complete Tate resolution of $\Ring/B$ by free $\Ring/A$-modules as described in \ref{Jul24} and \ref{4.2}.

\item\label{5.1.c}Let $\rho:\Ring\to R$ be a surjective ring homomorphism with $A\subseteq \ker \rho$. Assume that $\Fdagger\t_{\Ring} R=F$, $\Gdagger\t_{\Ring} R=G$, the composition 
$$F=\Fdagger\t_{\Ring} R\xrightarrow{\Xi\t 1} \Ring\t_{\Ring}R=R$$ is $\done$ and the composition $$G=\Gdagger\t_{\Ring}R\xrightarrow{\Phi \t_{\Ring}1}\Fdagger\t_{\Ring}R=F$$ is $\blop$.  
\end{enumerate}\end{data}
\begin{Remark} We use Data~\ref{5.1} as we state and prove conditions which are equivalent to the statement ``$I$ is a \qci''. Recall from \cite[Lem.~1.2]{AHS13} that if $I$ is a \qci, then $\grade I=\ff-\gog$; and therefore, the inequality (\ref{HYP}) holds automatically in this case.
\end{Remark}

\begin{proposition}\label{4.5}Adopt the data of {\rm\ref{5.1}}.
Then the following statements hold{\rm:}
\begin{enumerate}[\rm(a)]
\item\label{4.5.a} $\Pdagger$ and $\Tdagger$ are the minimal resolution and the minimal complete resolution of $\Ring/B$ by free $\Ring/A$-modules, respectively{\rm;}
\item\label{4.5.b} $\Pdagger\t_{\Ring/A}R$ and $\Tdagger\t_{\Ring/A}R$ are the minimal two-step Tate complex and the minimal two-step complete Tate complex associated to the data $(\done,\blop)$ in the sense of {\rm\ref{two-step}} and {\rm\ref{Define T*}}, respectively{\rm;} and 
\item\label{4.5.c} the following statements are equivalent{\rm:}
  \begin{enumerate}[\rm(i)]
\item\label{4.5.c.i} $I$ is a \qci ideal of $R${\rm;}
\item\label{4.5.c.ii} $\Pdagger\t_{\Ring/A}R$ is a resolution of $R/I$ by free $R$-modules{\rm;} and
\item\label{4.5.c.iii} $\Tor_i^{\Ring/A}(\Ring/B,R)=0$ for all positive $i${\rm;}\end{enumerate}
\item\label{4.5.d} the following statements are equivalent{\rm:}
\begin{enumerate}[\rm(i)]\item\label{4.5.c.iv} $\Tdagger\t_{\Ring/A}R$ is a complete resolution of $R/I$ by free $R$-modules{\rm;}  and 
\item\label{4.5.c.v} $\widehat{\Tor}_i^{\Ring/A}(\Ring/B,R)=0$ for all integers $i$.
\end{enumerate}
\end{enumerate}
\end{proposition}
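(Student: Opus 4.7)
The plan is to handle each of the four parts in sequence, with most of the work concentrated in (\ref{4.5.d}). Parts (\ref{4.5.a}) and (\ref{4.5.b}) are essentially bookkeeping. (\ref{4.5.a}) combines Tate's theorem (as recorded in \ref{Jul24}) for $\Pdagger$ with Proposition~\ref{4.2} for $\Tdagger$; both apply because \ref{5.1}(\ref{5.1.b}) supplies the hypotheses of \ref{dataJun8} on the nose. For (\ref{4.5.b}), the compatibility clauses of \ref{5.1}(\ref{5.1.c})---$\Fdagger\otimes_\Ring R=F$, $\Gdagger\otimes_\Ring R=G$, $\Xi\otimes 1=\done$, and $\Phi\otimes 1=\blop$---ensure that applying $-\otimes_\Rbar R$ to the divided-power algebra description of $\Pdagger$ and to the mapping-cone description of $\Tdagger$ returns, on the nose, the two-step Tate complex of Definition~\ref{two-step} and the minimal two-step complete Tate complex of Definition~\ref{Define T}. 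Minimality survives the tensoring because $\rho(\M)\subseteq\m$.

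Part (\ref{4.5.c}) is then short. Since $\Pdagger$ is an $\Rbar$-free resolution of $\Ring/B$ and $\Ring/B\otimes_\Rbar R=R/I$, one has $H_i(\Pdagger\otimes_\Rbar R)=\Tor_i^\Rbar(\Ring/B,R)$ augmented onto $R/I$; this makes (\ref{4.5.c.ii})$\Leftrightarrow$(\ref{4.5.c.iii}) immediate, and (\ref{4.5.c.i})$\Leftrightarrow$(\ref{4.5.c.ii}) is Proposition~\ref{2.2.4} applied to the two-step Tate complex supplied by (\ref{4.5.b}).

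For part (\ref{4.5.d}), the direction (\ref{4.5.c.iv})$\Rightarrow$(\ref{4.5.c.v}) is immediate: $\Tdagger$ is a complete $\Rbar$-resolution of $\Ring/B$, so $H_i(\Tdagger\otimes_\Rbar R)=\widehat\Tor_i^\Rbar(\Ring/B,R)$, which must vanish for a complete resolution. For the converse I would verify the three clauses of \ref{2.1.1} for $\Tdagger\otimes R$ in turn. First, the acyclicity $H_i(\Tdagger\otimes R)=0$ is exactly (\ref{4.5.c.v}). Second, the identification $(\Tdagger\otimes R)^\ast=\Tdagger^\ast\otimes_\Rbar R$ (using that each $\Tdagger_i$ is finitely generated and free) exhibits its cohomology as $\widehat\Ext^i_\Rbar(\Ring/B,R)$, and a standard Ext/Tor symmetry for modules of finite CI-dimension (which $\Ring/B$ is, of dimension $\ff-\gog$, via the quasi-deformation $\Rbar\to\Rbar\leftarrow\Ring$ whose kernel is the regular sequence generating $A$) converts the vanishing of all $\widehat\Tor_i$ into the vanishing of all $\widehat\Ext^i$. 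Third, to exhibit a projective resolution $F$ of $R/I$ with $F_{\ge r}=(\Tdagger\otimes R)_{\ge r}$, I would first show that $I$ is a \qci: Theorem~\ref{2.4*} turns (\ref{4.5.c.v}) into $\Tor_i^\Rbar(\Ring/B,R)=0$ for $i>\ff-\gog$, and the hypothesis (\ref{HYP}) $\ff-\gog\le\grade I$ should let one upgrade this to vanishing for every positive $i$, after which part (\ref{4.5.c}) gives that $\Pdagger\otimes R$ is itself a free resolution of $R/I$ whose $\bigwedge^\gog G$-twist agrees with $\Tdagger\otimes R$ in degrees $\ge\ff-\gog$.

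The main obstacle is this last upgrade: promoting $\Tor$-vanishing above CI-dimension to $\Tor$-vanishing in every positive degree. This is precisely the step where the grade bound (\ref{HYP}) is essential, and it encodes the nontrivial rigidity content of the proposition.
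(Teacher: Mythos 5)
Your handling of parts (\ref{4.5.a}), (\ref{4.5.b}), and (\ref{4.5.c}) agrees with the paper's proof, which is equally short: (\ref{4.5.a}) is Tate plus Proposition~\ref{4.2}, (\ref{4.5.b}) is bookkeeping from the compatibilities in \ref{5.1}(\ref{5.1.c}), and (\ref{4.5.c}) follows from Proposition~\ref{2.2.4} together with the identification $\HH_\bullet(\Pdagger\t_{\Rbar}R)=\Tor_\bullet^{\Rbar}(\Ring/B,R)$. In part (\ref{4.5.d}) the direction (\ref{4.5.c.iv})$\Rightarrow$(\ref{4.5.c.v}) is correct for the reason you give. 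For (\ref{4.5.c.v})$\Rightarrow$(\ref{4.5.c.iv}) your instinct to check all three clauses of~\ref{2.1.1} is sound. Note, though, that for the second clause the intended argument is not an Ext--Tor symmetry theorem but rather the built-in self-duality of $\Tdagger$: by Definition~\ref{Define T} the complex $T$ is the cone of a map $P\t\bigwedge^{\gog}G\to P^\vee[\gog-\ff]$, which makes $T^\vee$ (hence, after choosing a trivialization of $\bigwedge^{\ff}F$, also $T^*$) a shift-and-twist of $T$; so $\HH^\bullet((\Tdagger\t R)^*)=0$ is an immediate formal consequence of $\HH_\bullet(\Tdagger\t R)=0$.

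The genuine difficulty, which you correctly isolate, is the third clause: $\widehat\Tor$-vanishing gives $\Tor_i^{\Rbar}(\Ring/B,R)=0$ only for $i>\ff-\gog$, while matching a free resolution of $R/I$ in high degrees requires $\Pdagger\t_{\Rbar}R$ to actually be a resolution of $R/I$, i.e.\ vanishing for all positive $i$. Your proposed fix --- first deduce that $I$ is a \qci via Theorem~\ref{main} and then invoke part~(\ref{4.5.c}) --- is not circular, because Theorem~\ref{main} is proved using only part (\ref{4.5.c}), not (\ref{4.5.d}); but it does pull forward the rigidity argument (the pass to $R'=R/(\xu)$ using hypothesis~(\ref{HYP})), which is the real content of the paper and is \emph{not} part of the paper's proof of Proposition~\ref{4.5}. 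In fact the paper's own justification of (\ref{4.5.d}) is a single sentence that addresses only the acyclicity clause, and the paper only ever invokes (\ref{4.5.c.v})$\Rightarrow$(\ref{4.5.c.iv}) in the ``Furthermore'' of Theorem~\ref{main}, by which point $\Tor_i=0$ for all $i>0$ is already established and the tail clause is automatic. So your ``main obstacle'' is a real one, your diagnosis of it is accurate, and closing it requires either importing Theorem~\ref{main}'s rigidity (as you suggest) or reading (\ref{4.5.d}) as the paper implicitly does, namely under the standing availability of (\ref{4.5.c.iii}).
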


\begin{proof}Assertion  (\ref{4.5.a}) is established in \cite[Thm.~4]{T57} (for $\Pdagger$) and Proposition~\ref{4.2} (for $\Tdagger$).
Assertion~(\ref{4.5.b}) follows from the definition of $\rho$. 
Assertions (\ref{4.5.c.i}) and (\ref{4.5.c.ii}) are equivalent because of (\ref{2.2.4}). Assertions (\ref{4.5.c.ii}) and (\ref{4.5.c.iii}) are equivalent because the homology of $\Pdagger\t_{\Ring/A}R$ is $\Tor_\bullet^{\Ring/A}(\Ring/B,R)$. Similarly, assertions (\ref{4.5.c.iv}) and (\ref{4.5.c.v}) are equivalent because the homology of $\Tdagger\t_{\Ring/A}R$ is $\widehat{\Tor}_\bullet^{\Ring/A}(\Ring/B,R)$.
\end{proof}

The main result of the paper, Theorem~\ref{main}, is an extension of Proposition~\ref{4.5}.(\ref{4.5.c}), by way of Theorem~\ref{2.4*}. The two-step Tate complex associated to the data $(\done,\blop)$ exhibits significant rigidity (see assertions (\ref{main.ii}) and (\ref{main.iii}) of Theorem~\ref{main} and also Corollary~\ref{Rigidity}) and one can use  Tate homology  in place of ordinary homology  when determining if $I$ is a \qci (see assertions (\ref{main.v}), (\ref{main.vi}), and (\ref{main.vii}) of Theorem~\ref{main}). Furthermore, if $I$ is a \qci, then an explicit complete resolution for $R/I$ is given.

The first step in the transition from Proposition~\ref{4.5} to Theorem~\ref{main} is given in Observation~\ref{4.6}.

\begin{observation}\label{4.6}Adopt the data of  {\rm\ref{5.1}}. The following statements hold{\rm:} 
\begin{enumerate}[\rm(a)]
\item $\CIdim_{\Ring/A}(\Ring/B)=\ff-\gog$, and
\item $\cx_{\Ring/A}(\Ring/B)=\gog$. 
\end{enumerate}
\end{observation}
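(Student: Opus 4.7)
\medskip

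The plan is to read both statements directly off the minimal free resolution of $\Ring/B$ over $\Ring/A$, which by Proposition~\ref{4.5}.(\ref{4.5.a}) is the two-step Tate complex $\Pdagger$, together with the tools collected in Section~\ref{Prelims}.

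For (a), I would first exhibit a quasi-deformation of $\Ring/A$ of codimension $\gog$ by taking
$$\Ring/A\xrightarrow{\ \mathrm{id}\ }\Ring/A\ \xleftarrow{\ \rm nat\ }\ \Ring,$$
where the right-hand map is the canonical surjection whose kernel is $A$; by hypothesis $A$ is generated by a regular sequence on $\Ring$ of length $\gog$, so this is a quasi-deformation in the sense of \ref{2.3.1}. Because $B$ is also generated by a regular sequence on $\Ring$ (of length $\ff$), the Koszul complexes give $\pd_\Ring(\Ring/B)=\ff$ and $\pd_\Ring(\Ring/A)=\gog$, whence
$$\CIdim_{\Ring/A}(\Ring/B)\ \le\ \pd_\Ring(\Ring/B)-\pd_\Ring(\Ring/A)\ =\ \ff-\gog<\infty.$$
Once finiteness is known, I would invoke the equality $\CIdim_{\Ring/A}(\Ring/B)=\depth(\Ring/A)-\depth_{\Ring/A}(\Ring/B)$ from \ref{2.2.2}. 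Since $A$ (resp.\ $B$) is generated by a regular sequence on $\Ring$ of length $\gog$ (resp.\ $\ff$), we have $\depth(\Ring/A)=\depth\Ring-\gog$ and $\depth_{\Ring/A}(\Ring/B)=\depth_\Ring(\Ring/B)=\depth\Ring-\ff$, giving $\CIdim_{\Ring/A}(\Ring/B)=\ff-\gog$ exactly.

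For (b), since $\CIdim_{\Ring/A}(\Ring/B)$ is finite by part (a), \ref{2.4} identifies $\cx_{\Ring/A}(\Ring/B)$ with the order of the pole at $t=1$ of the Poincar\'e series $P^{\Ring/A}_{\Ring/B}(t)$. By Proposition~\ref{4.5}.(\ref{4.5.a}), this series can be read off of $\Pdagger$: in degree $i$ the free module is $\bigoplus_{p+2q=i}\bigwedge^{p}\Fbar\otimes_{\Rbar} D_q\Gbar$, of rank $\sum_{p+2q=i}\binom{\ff}{p}\binom{\gog+q-1}{q}$. Summing and using the standard identity $\sum_{q\ge 0}\binom{\gog+q-1}{q}t^{2q}=(1-t^2)^{-\gog}$, I obtain
$$P^{\Ring/A}_{\Ring/B}(t)\ =\ \frac{(1+t)^{\ff}}{(1-t^2)^{\gog}}\ =\ \frac{(1+t)^{\ff-\gog}}{(1-t)^{\gog}}.$$
Since $\ff\ge\gog$ by Data~\ref{5.1}, the numerator does not vanish at $t=1$, so the pole there has order exactly $\gog$, and hence $\cx_{\Ring/A}(\Ring/B)=\gog$.

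There is no real obstacle: the only moving parts are the rank computation for $\Pdagger$ and the Poincar\'e series identity, both of which are entirely routine. Everything else is a direct appeal to the already-established structural results (\ref{2.2.2}, \ref{2.4}, and Proposition~\ref{4.5}.(\ref{4.5.a})).
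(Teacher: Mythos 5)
Your proof is correct and follows essentially the same route as the paper: the same quasi-deformation $\Ring/A \to \Ring/A \leftarrow \Ring$ to get finiteness of $\CIdim$, the same appeal to the depth formula in \ref{2.2.2} to pin it down, and the same computation of the Poincar\'e series $(1+t)^{\ff}/(1-t^2)^{\gog}$ from $\Pdagger$ via \ref{2.4}. The only difference is that you spell out the rank calculation and the factorization $(1+t)^{\ff-\gog}/(1-t)^{\gog}$ that makes the pole order visibly $\gog$, a step the paper leaves implicit.
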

\begin{proof} First consider the quasi-deformation 
$$\xymatrix{\Ring/A\ar[rrr]^{\ \ =\ \ }&&& \Ring/A
\ar@{<-}[rrr]^{\text{natural quotient map}}&&&
\Ring.}$$
Observe that $$\pd_{\Ring}(\Ring/B)-\pd_{\Ring} (\Ring/A)=\ff-\gog.$$ It follows from (\ref{2.3.1}) that $\CIdim_{\Ring/A}(\Ring/B)$ is finite (and at most $\ff-\gog$). Furthermore, it follows from (\ref{2.2.2}) that 
\begingroup\allowdisplaybreaks\begin{align*}\CIdim_{\Ring/A}(\Ring/B)={}&\depth (\Ring/A)-\depth_{\Ring/A}(\Ring/B)\\{}={}&\depth (\Ring/A)-\depth(\Ring/B)\\{}={}&(\depth \Ring-\gog)-(\depth \Ring-\ff)=\ff-\gog.\end{align*}\endgroup
At this point, \ref{2.4} guarantees that $\cx_{\Ring/A} \Ring/B$ is the order of the pole of the Poincar\'e series $P_{\Ring/B}^{\Ring/A}$. The complex  $\Pdagger$ is the minimal resolution of 
$\Ring/B$ by free $\Ring/A$ modules; thus $P_{\Ring/B}^{\Ring/A}=\frac{(1+t)^\ff}{(1-t^2)^\gog}$. It follows that $\cx_{\Ring/A} \Ring/B=\gog$.
\end{proof}

\begin{theorem}\label{main} Adopt the data of {\rm\ref{5.1}}. 
Then the following statements are equivalent{\rm:}
\begin{enumerate}[\rm(i)]
\item \label{main.ii}$\Tor _i ^{\Ring/A} (\Ring/B, R) = 0$ for $\gog + 1$ consecutive values of $i$ with $0<i${\rm;}
\item \label{main.iii}$\Tor _i^{\Ring/A} (\Ring/B, R) = 0$ for $0\ll i${\rm;}
\item \label{main.iv}$\Tor_i^{\Ring/A} (\Ring/B, R) = 0$ for all $i$ with $0<i${\rm;}
\item \label{main.v}$\widehat\Tor_i^{\Ring/A} (\Ring/B, R) = 0$ for $\gog + 1$ consecutive values of $i${\rm;}
\item \label{main.vi}$\widehat\Tor_i^{\Ring/A} (\Ring/B, R) = 0$  for $i \ll 0${\rm;} 
\item \label{main.vii}$\widehat\Tor_i^{\Ring/A} (\Ring/B, R) = 0$  for all integers $i${\rm;} and
\item\label{main.i}  $I$ is a \qci ideal of $R$.
\end{enumerate}
Furthermore, if the above statements hold, then $\Pdagger\t_{\Ring/A}R$ and $\Tdagger\t_{\Ring/A}R$ are the minimal resolution and the minimal complete resolution of $R/I$ by free $R$-modules, respectively.
\end{theorem}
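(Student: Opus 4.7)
The plan is to assemble three prior ingredients: Proposition~\ref{4.5}.(\ref{4.5.c}), which identifies (\ref{main.i}) with (\ref{main.iv}); Observation~\ref{4.6}, which supplies $\CIdim_{\Ring/A}(\Ring/B) = \ff - \gog$ and $\cx_{\Ring/A}(\Ring/B) = \gog$; and Theorem~\ref{2.4*}, applied to the pair $(M,N) = (\Ring/B, R)$, whose hypotheses are guaranteed by Observation~\ref{4.6}. The six equivalent conditions of \ref{2.4*} correspond to (\ref{main.ii})--(\ref{main.vii}) of the present theorem, modulo a translation between the index ranges ``$i > 0$'' (appearing in (\ref{main.ii}) and (\ref{main.iv})) and ``$i > \ff - \gog$'' (appearing in \ref{2.4*}.(i) and (iii)). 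The final assertion, that $\Pdagger \t_{\Ring/A} R$ and $\Tdagger \t_{\Ring/A} R$ are the minimal resolution and minimal complete resolution of $R/I$, is immediate from Proposition~\ref{4.5}.(\ref{4.5.a})--(\ref{4.5.b}) once the equivalences are in hand.

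Concretely, I would first record (\ref{main.i}) $\Leftrightarrow$ (\ref{main.iv}) from Proposition~\ref{4.5}.(\ref{4.5.c}), along with the trivial implications (\ref{main.iv}) $\Rightarrow$ (\ref{main.ii}), (\ref{main.iii}) and (\ref{main.vii}) $\Rightarrow$ (\ref{main.v}), (\ref{main.vi}). I would then invoke Theorem~\ref{2.4*} to obtain the mutual equivalence of (\ref{main.iii}), (\ref{main.v}), (\ref{main.vi}), (\ref{main.vii}) (since these match \ref{2.4*}.(ii), (iv), (v), (vi) verbatim), and to deduce from any one of these, or from (\ref{main.ii}) with a suitable shift, that $\Tor_i^{\Ring/A}(\Ring/B, R) = 0$ for all $i > \ff - \gog$ and $\widehat{\Tor}_i^{\Ring/A}(\Ring/B, R) = 0$ for all integers $i$. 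What remains is to promote vanishing in the range $i > \ff - \gog$ to vanishing in the full range $i > 0$, i.e., to (\ref{main.iv}).

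This promotion is the main obstacle and is where hypothesis~(\ref{HYP}) enters decisively. Since $\grade I \geq \ff - \gog$, one can find---after a faithfully flat base change $R \to R[T]_{\m R[T]}$ to arrange an infinite residue field, should that be necessary---a regular sequence $\xu = x_1,\ldots,x_{\ff-\gog}$ inside $I$ that is the beginning of a minimal generating set of $I$. Proposition~\ref{2.2.5} then reduces $(R, I)$ to $(R' = R/(\xu),\ I' = I/(\xu))$, for which the new invariants satisfy $\ff' = \gog' = \gog$ and $\CIdim_{\Ring'/A'}(\Ring'/B') = 0$; in this reduced setting the two ranges ``$i > 0$'' and ``$i > \ff' - \gog'$'' coincide, so Theorem~\ref{2.4*} at once delivers vanishing of $\Tor$ for all positive $i$. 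The quasi-isomorphism of two-step Tate complexes provided by Proposition~\ref{2.2.5}.(\ref{2.2.5.b}), combined with Proposition~\ref{4.5}.(\ref{4.5.b}) identifying $\Pdagger \t_{\Ring/A} R$ with the two-step Tate complex of $(\xi, \phi)$, transfers this vanishing back to the original data and yields (\ref{main.iv}), completing the cycle. The technical care needed is in verifying that the regular sequence $\xu$ can be chosen compatibly with the minimal generating data and that the reduction preserves the Tate-complex homology in the way required.
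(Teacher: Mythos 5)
Your proposal is correct and follows essentially the same route as the paper's proof: use Proposition~\ref{4.5}.(\ref{4.5.c}) to tie the \qci condition to vanishing of $\Tor$ in all positive degrees, invoke Observation~\ref{4.6} together with Theorem~\ref{2.4*} for the mutual equivalence of the Tate and ordinary vanishing conditions (with index range $i>\ff-\gog$), and then close the gap between the range $i>\ff-\gog$ and $i>0$ by choosing a regular sequence $\xu$ of length $\ff-\gog$ that begins a minimal generating set of $I$ (using hypothesis~(\ref{HYP})), reducing to $(R',I')=(R/(\xu),I/(\xu))$ via Proposition~\ref{2.2.5}, where $\ff'=\gog'$ forces the two ranges to coincide, and transferring back through the quasi-isomorphism of Tate complexes. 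The paper obtains the regular sequence directly via the prime avoidance lemma and needs no faithfully flat base change to an infinite residue field, so that part of your proposal is an unnecessary precaution; otherwise the structure, the key lemmas, and the logical flow match the published argument.
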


\begin{proof}We saw in Proposition~\ref{4.5}.(\ref{4.5.c}) that 
\begin{equation}\label{real5.4.1} \text{(\ref{main.i})} \iff \text{(\ref{main.iv})}.\end{equation}
 Apply Theorem~\ref{2.4*} with $R$ replaced by $\Ring/A$, $M$ by $\Ring/B$, and $N$ by $R$. Use the results from Observation~\ref{4.6}:
$$\CIdim_{\Ring/A}\Ring/B=\ff-\gog\quad\text{and}\quad \cx_{\Ring/A}\Ring/B=\gog.$$It follows that the statements 
\begin{equation}\label{equiv}\text{(\ref{main.ii}$'$),\quad (\ref{main.iii}),\quad (\ref{main.iv}$'$),\quad(\ref{main.v}),\quad(\ref{main.vi}),\quad(\ref{main.vii})}\end{equation}
 are equivalent, where (\ref{main.ii}$'$) and (\ref{main.iv}$'$) are

\medskip\noindent (\ref{main.ii}$'$)\quad $\Tor _i ^{\Ring/A} (\Ring/B, R) = 0$ for $\gog + 1$ consecutive values of $i$ provided each of these values $i$ satisfies   $\ff-\gog<i${\rm;} and 

\medskip\noindent(\ref{main.iv}$'$)\quad  $\Tor_i^{\Ring/A} (\Ring/B, R) = 0$ for all $i$ with $\ff-\gog<i${\rm.}

\medskip \noindent
It is clear that 
$$\text{(\ref{main.iv})}\implies \text{(\ref{main.iv}$'$)\quad and\quad(\ref{main.ii})}\implies
\text{(\ref{main.ii}$'$)}.$$
To complete the proof, we show 
\begin{equation}\label{STS}\text{(\ref{main.iv}$'$)}\implies \text{(\ref{main.iv})\quad and\quad(\ref{main.ii}$'$)}\implies
\text{(\ref{main.ii})}.\end{equation}

Let $r=\ff-\gog$ and $\underline{x}=x_1,\dots,x_r$ be the beginning of a minimal generating set for $I$ which is also a 
 regular sequence in $I$ on $R$. (Hypothesis~(\ref{HYP}), together with the prime avoidance lemma, guarantees that $\underline{x}$ exists.) Let $R'$, $I'$,
$\ff'$ and $\gog'$ denote 
  $R/(\underline{x})$, $I/(\underline{x})$, $\mu(I')$ and $\mu\big(\HH_1(\bigwedge^\bullet (F\t_RR'))\big)$, respectively. 
The fact that $\underline{x}$ begins a minimal generating set for $I$ ensures that $\ff'=\ff-r$.  Create the data $(\done',\blop')$ for $I'$ in $R'$ as described in Proposition~\ref{2.2.5}.
The proof of Proposition~\ref{2.2.5} demonstrates that there is a quasi-isomorphism from the Koszul complex associated to $\done$ to the Koszul complex associated to $\done'$. It follows that $\gog'=\gog$. 
The statement of Proposition~\ref{2.2.5} asserts  that there is a
quasi-isomorphism from  the two-step Tate complex for $I$ in $R$ 
to the two-step Tate complex for $I'$ in $R'$. 
On the other hand, we know from Proposition~\ref{4.5}.(\ref{4.5.b}) that $\Pdagger\t_{\Ring/A}R$ is the minimal two-step Tate complex for $I$ in $R$ and 
$\Pdagger\t_{\Ring/A}R'$ is the minimal two-step Tate complex for the ideal $I'$ of $R'$. Thus, 
\begin{equation}\notag\HH_i(\Pdagger\t_{\Ring/A}R)\cong  
\HH_i(\Pdagger\t_{\Ring/A}R'),\quad\text{for all $i$;}\end{equation}hence,
\begin{equation}\label{qisom}
\Tor _i^{\Ring/A} (\Ring/B, R)\cong 
\Tor _i^{\Ring/A} (\Ring/B, R'),\quad\text{for all $i$.}
\end{equation}

\medskip
We prove (\ref{STS}). 
 Assume that either (\ref{main.ii}$'$) or (\ref{main.iv}$'$) holds for $I$. It follows from (\ref{equiv}) that (\ref{main.iii}) holds for $I$.  
Apply 
(\ref{qisom}) to see   that (\ref{main.iii}) holds for $I'$. Hence, (\ref{main.iv}$'$) holds for $I'$ by (\ref{equiv}), again. On the other hand, (\ref{main.iv}$'$) for $I'$ is the same as (\ref{main.iv}) for $I'$ because $$\ff'-\gog'=(\ff-r)-\gog=0.$$ Use (\ref{qisom}), again, to see   that (\ref{main.iv}) holds for $I$. It is clear that (\ref{main.iv}) implies (\ref{main.ii}).
\end{proof}

Corollaries~\ref{May19} and \ref{every} are reformulations of (\ref{main.i}) implies (\ref{main.iv}) from Theorem~\ref{main}. Corollary~\ref{May19} is easier to apply than the full statement of Theorem~\ref{main}.

\begin{corollary}\label{May19} If $I$ is a \qci ideal in a local ring $R$, then there exists a local ring $\Ring$ and ideals $A\subseteq B\cap C$ in $\Ring$ such that 
\begin{enumerate}[\rm(a)]
\item $A$ is generated by a regular sequence of length $\mu(I)-\grade I$, 
\item $B$ is generated by a regular sequence of  length $\mu(I)$, 
\item $R=\Ring/C$,
\item $\Tor_i^{\Ring/A}(\Ring/B,R)=0$, for $1\le i$,
 and 
\item $BR=I$.
\end{enumerate}
\end{corollary}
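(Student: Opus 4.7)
The plan is to apply Construction~\ref{X3.4} to the given \qci ideal $I$, take $\Ring$ to be the resulting $\Rtil$, $B$ and $A$ to be the resulting nested complete intersection ideals, and $C$ to be the kernel of $\rho$; then Proposition~\ref{Jun17-843} supplies most of the listed conditions and Proposition~\ref{4.5}.(\ref{4.5.c}) supplies the $\Tor$-vanishing.

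First I would assemble Data~\ref{UniformData} for $I$: a minimal homomorphism $\done:F\to R$ with $\im\done=I$, so $\ff=\mu(I)$; set $\gog=\mu(K_1(I))$, and pick $\blop:G\to Z_1(\bigwedge^\bullet F)$ lifting a minimal surjection $G\to\HH_1(\bigwedge^\bullet F)$ with $G$ free of rank $\gog$. Because $I$ is \qci, \cite[Lem.~1.2]{AHS13} gives $\grade I=\ff-\gog$, in particular $0\le \ff-\gog\le \grade I$, so the hypothesis (\ref{HYP}) of Data~\ref{5.1}(a) is automatic.

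Next I would feed these data into Construction~\ref{X3.4} to obtain the local ring $\Rtil$, the maps $\Xi:\Ftil\to\Rtil$ and $\Phi:\Gtil\to\Ftil$, and the surjection $\rho:\Rtil\to R$. I set $\Ring=\Rtil$, $B=\im\Xi$, $A=\im(\Xi\circ\Phi)$, and $C=\ker\rho$. Proposition~\ref{Jun17-843}(\ref{X3.1.b}) gives item (b) of the corollary and Proposition~\ref{Jun17-843}(\ref{X3.1.a}), together with the identity $\gog=\mu(I)-\grade I$ just recalled, gives (a); Proposition~\ref{Jun17-843}(\ref{X3.1.f}) gives (e); and (c) is the surjectivity of $\rho$, which is built into Construction~\ref{X3.4}. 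The containment $A\subseteq B$ is immediate from $A=\im(\Xi\circ\Phi)\subseteq\im\Xi=B$, while $A\subseteq C$ is precisely Proposition~\ref{Jun17-843}(\ref{X3.1.g}) (which asserts $\rho(A)=0$), so $A\subseteq B\cap C$.

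For condition (d), I would observe that parts (\ref{4.3.e}), (\ref{4.3.f}), (\ref{4.3.g}) of Proposition~\ref{Jun17-843} say exactly that the triple $(\Ring,A,B)$ together with $\rho$ verifies the compatibility statements of Data~\ref{5.1}(b)--(c); hence the equivalence (\ref{4.5.c.i})$\iff$(\ref{4.5.c.iii}) in Proposition~\ref{4.5}.(\ref{4.5.c}), applied to the hypothesis that $I$ is \qci, yields $\Tor_i^{\Ring/A}(\Ring/B,R)=0$ for every $i\ge 1$. I do not expect a substantive obstacle here: the corollary is essentially a repackaging of the generic construction of Section~\ref{generic-data} together with the $\Tor$-vanishing half of Proposition~\ref{4.5}.(\ref{4.5.c}), and the only thing to keep straight is the numerical identity $\grade I=\ff-\gog$ that makes the length of the regular sequence generating $A$ equal to $\mu(I)-\grade I$.
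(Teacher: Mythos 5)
Your proposal is correct and takes essentially the same route as the paper: the paper's proof is the one-line instruction to apply Theorem~\ref{main} to the generic data of Proposition~\ref{Jun17-843}, and you have simply unpacked that instruction. The only (harmless) variation is that you invoke the equivalence (\ref{4.5.c.i})$\iff$(\ref{4.5.c.iii}) of Proposition~\ref{4.5}.(\ref{4.5.c}) directly rather than citing Theorem~\ref{main}; since the implication (\ref{main.i})$\Rightarrow$(\ref{main.iv}) of Theorem~\ref{main} is established precisely via Proposition~\ref{4.5}.(\ref{4.5.c}), this is the same argument with the unused rigidity content of Theorem~\ref{main} stripped away.
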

\begin{proof}
Apply Theorem~\ref{main} to the generic data built in Proposition~\ref{Jun17-843} for the ideal $I$ in $R$.
\end{proof}
The following observation-definition has been adapted from \cite[8.7]{AHS13}.
\begin{chunk} Let 
$\rho:Q\to R$ be a surjective homomorphism of
Noetherian local rings,
and  $\mcI$ be a \qci ideal of $Q$.
If
$\Tor_i^Q(Q/\mcI,R)=0$ for $1\le i$, then 
$\mcI R$ is a \qci ideal of $R$. 
Furthermore, one says that $\mcI R$ is obtained from $\mcI$ {\it by flat base change}. \end{chunk}

\begin{corollary}\label{every}Every \qci ideal in a local Noetherian ring is obtained from a pair of nested complete intersection ideals by way of a flat base change.\end{corollary}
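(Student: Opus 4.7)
The plan is to derive Corollary~\ref{every} as essentially a reformulation of Corollary~\ref{May19}, modulo a small amount of bookkeeping to fit its conclusion into the language of flat base change from the preceding observation-definition. Since the substantive work---producing the generic pair of nested complete intersections and verifying the required Tor vanishing---has already been completed in Proposition~\ref{Jun17-843} and Theorem~\ref{main}, I do not anticipate any further technical obstacles; the ``main obstacle'', to the extent there is one, is simply checking that the pair of nested complete intersections supplied by Corollary~\ref{May19} does give rise to a \qci ideal in the intermediate ring.

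Let $I$ be a \qci ideal in a local Noetherian ring $R$. First I would apply Corollary~\ref{May19} to produce a local ring $\Ring$ and ideals $A\subseteq B\cap C$ of $\Ring$ satisfying properties (a)--(e) of that statement: $A$ and $B$ are generated by regular sequences, $R=\Ring/C$, $\Tor_i^{\Ring/A}(\Ring/B,R)=0$ for $i\ge 1$, and $BR=I$. Set $Q=\Ring/A$ and $\mcI=B/A\subseteq Q$; since $A\subseteq C$, the surjection $\Ring\to R=\Ring/C$ factors through a surjection of local rings $\rho\colon Q\to R$, and $\mcI R=BR=I$.

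Next I would verify that $\mcI$ is a \qci ideal of $Q$, so that the flat base change vocabulary actually applies. The generic construction in Proposition~\ref{Jun17-843} produces $A$ and $B$ with $A\subseteq \M B$, because the defining homomorphisms $\Xi$ and $\Phi$ are both minimal by \ref{4.3.e}. Hence the setup of Data~\ref{dataJun8} is in force, and Tate's theorem recalled in \ref{Jul24} shows that the two-step Tate complex $\Pdagger$ is a minimal free resolution of $\Ring/B=Q/\mcI$ over $Q=\Ring/A$; Proposition~\ref{2.2.4} then tells us that $\mcI$ is a \qci ideal of $Q$. Combined with property~(d) of Corollary~\ref{May19}, which is exactly the Tor vanishing $\Tor_i^Q(Q/\mcI,R)=0$ for $i\ge 1$ required in the observation-definition, this allows me to conclude that $I=\mcI R$ is obtained from the \qci ideal $\mcI$ by flat base change, where $\mcI$ itself arises from the nested complete intersections $A\subseteq B$ in $\Ring$.
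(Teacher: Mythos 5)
Your proof is correct and follows the same route as the paper: the paper's proof of Corollary~\ref{every} is simply to apply Corollary~\ref{May19} with $Q=\Ring/A$, $\mcI=B/A$, and $\ker\rho=C/A$. Your extra step---checking explicitly that $\mcI=B/A$ is a \qci ideal of $Q$, via Tate's theorem and Proposition~\ref{2.2.4}, so that the flat base change language genuinely applies---is a detail the paper leaves implicit, and it is a worthwhile addition rather than a divergence.
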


\begin{proof} Apply Corollary~\ref{May19} with $Q=\Ring/A$, $\mcI=B/A$, and $\ker \rho=C/A$. \end{proof}

\begin{examples}\label{Ex} Examples (\ref{Ex.a}) and  (\ref{Ex.b}) were the well-understood examples of \qci ideals as described in \cite{KSV14}. On the other hand, the Example (\ref{Ex.d})  is also given,  but was not well-understood, in \cite{KSV14}.  \begin{enumerate}[\rm(a)]\item\label{Ex.a} 
If the ideal $I$ is generated by a regular sequence in the local ring $R$, then, in the language of Corollary~\ref{May19},  one can take $\Ring=R$, $A=C=0$, and $B=I$. 

\item\label{Ex.b} If $a$ and $b$ are a pair of exact zero divisors in the local ring $(R,\m)$ and $I$ is generated by $a$, then, in the language of Corollary~\ref{May19}, one can take $$\Ring=R[x_1,x_2]_{(\mathfrak m,x_1,x_2)}, \quad A=(x_1x_2),\quad\text{and}\quad B=(x_1).$$ Define $\rho:\Ring\to R$ to be the $R$-algebra homomorphism with  $\rho(x_1)=a$, and $\rho(x_2)=b$. 
Thus, $C=(x_1-a,x_2-b)$. 

\item\label{Ex.d} It was not possible to explain the
 \qci ideal $I$ of the ring $R$ in \cite[Sect.~4]{KSV14} using any of the techniques that appeared in \cite{KSV14}.

Let $\kk$ be a field, 
$\Ring$ be the polynomial ring $$\Ring=\kk[x_1,x_2,x_3,x_4,x_5],$$  $C$ be the ideal
$$C=(
x_1^2-x_2x_3,\ x_2^2-x_3x_5,\ x_3^2-x_1x_4,\ x_4^2,\ x_5^2,\ x_3x_4,\ x_2x_5,\ x_4x_5)$$
of $\Ring$, $f_1$ and $f_2$ be the elements $f_1=x_1+x_2+x_4$ and $f_2= x_2+x_3+x_5$ of $\Ring$,
 $R$ be the ring $R=\Ring/C$   and  
$I$ be the ideal $(f_1,f_2)R$ of $R$.  

The following explanation of $I$ lead to Corollary~\ref{every}. 
Define $A$ to be  the ideal of $\Ring$ generated by the entries of the product
$$\left[\begin{matrix} f_1&f_2\end{matrix}\right]\left[\begin{matrix}
   x_1-x_2  &x_4    \\-x_3+x_4+2x_5 &x_2-x_3-x_4 \end{matrix}\right]$$
and $B=(f_1,f_2)\Ring$. Observe that $A\subseteq B$ are complete intersections. Observe further that 
$\pd_{\Ring/A}R$ is finite; indeed, the
 minimal resolution of $R$ is
$$\textstyle  
0\to \frac {\Ring}{A}(-4)^3 \to \frac {\Ring}{A}(-3)^8\to \frac {\Ring}{A}(-2)^6\to \frac {\Ring}{A}.$$Thus,
$$\Tor_i^{\Ring/A}(\Ring/B, R)=0,\quad\text{for $0\ll i$};$$
Apply (\ref{main.iii}) implies (\ref{main.iv}) from Theorem~\ref{main} to conclude that $$\Tor_i^{\Ring/A}(\Ring/B, R)=0,\quad\text{for $0< i$};$$
hence $I$ is obtained from the \qci ideal $B(\Ring/A)$ by way of flat base change. (In this example, one should localize as needed.)
\item We sketch a coordinate dependent argument for Corollary~\ref{every} in the general case. Let 
 $b_1,\dots,b_\ff$ be a minimal  generating set for the \qci ideal $I$ in the local ring $(R,\m)$, let  $(E,\partial)$ be the Koszul complex on this generating set,  and $v_1,\dots,v_\ff$ be a basis for $E_1$ with $\partial (v_i)=b_i$.

 Consider a set of cycles  $$z_j=\sum_{i=1}^\ff c_{i,j}v_i,$$in $E_1$, with $c_{i,j}\in \m$ and $1\le j\le \gog$, such that the homology classes $$\{\operatorname{cls}(z_j)\mid 1\le j\le \gog\}$$ minimally generate $\HH_1(E)$. 
According to \cite[1.2]{AHS13}, $\gog\le \ff$; indeed, $\grade_R(I)$ is equal to $\ff-\gog$.
Let $$\{\widetilde{b_i}\mid 1\le i\le \ff\}\cup \{\widetilde{c_{i,j}}\mid 1\le i\le \ff, 1\le j\le \gog\}$$ represent new indeterminates,
 $\mathfrak M$  be the maximal homogeneous ideal of the polynomial ring
 $$R[\{\widetilde{b_i}\}\cup \{\widetilde{c_{i,j}}\}],$$ $\widetilde{R}$ be  the local ring
\begin{equation}\label{.poly}
\notag
\widetilde{R}=R[\{\widetilde{b_i}\}\cup\{\widetilde{c_{i,j}}\}]_{\mathfrak M},\end{equation}
 and  $\rho:\widetilde{R}\to R$ be the surjective local $R$-algebra homomorphism with $\rho(\widetilde{b_i})=b_i$ and $\rho(\widetilde{c_{i,j}})=c_{i,j}$. (The $b_i$ are in the maximal ideal of $R$ because $I$ is a proper ideal of $R$; the $c_{i,j}$ are in the maximal ideal of $R$ because $b_1,\dots,b_\ff$ minimally generate $I$.)

Consider the ideals $A\subseteq B$ in $\widetilde R$, 
$$A=\big(\big\{\sum_i \widetilde{b_i}\widetilde{c_{i,j}}\mid 1\le j\le \gog\big\}\big)\quad\text{and}\quad B=(\widetilde{b_1},\dots,\widetilde{b_\ff}).$$ 
The ideals $A$ and $B$ are both complete intersections; $B/A$ is a \qci ideal of $R/A$; and the two step   Tate complex
$$\Pdagger=(\widetilde{R}/A){<}V_1,\dots V_\ff, W_1,\dots, W_\gog\mid \partial(V_i) = \widetilde{b_i}, \partial(W_j)= \sum_i \widetilde{c_{i,j}} V_i{>}$$ is a resolution of $\widetilde{R}/B$ by free $(\widetilde{R}/A)$-modules. This notation is explained in Remark~\ref{TateConstruction}. 

Notice that $$\rho(\sum_i \widetilde{b_i}\widetilde{c_{i,j}})
=\sum_i b_ic_{i,j}=\partial(\sum_i c_{i,j}v_i)=\partial(z_j)=0;$$ so $R$ is a $\widetilde{R}/A$-algebra. 
Notice also that $R\otimes_{\widetilde{R}/A} \Pdagger$ is the two-step Tate complex $$R\otimes_{\widetilde{R}/A} \Pdagger= R{<}v_1,\dots v_\ff, w_1,\dots, w_\gog\mid \partial(v_i) = b_i, \partial(w_j)= z_j{>},$$ which is a resolution of $R/I$ by free $R$-modules. 
It follows that
$$\Tor_i^{\widetilde{R}/A}(R,\widetilde{R}/B)=0,\quad\text{for $1\le i$}.$$
Thus, the \qci ideal is obtained from a pair of nested complete intersection ideals  by way of a flat base change. 
\end{enumerate} \end{examples}

\section{Application: Rigidity of the two-step Tate complex and the two-step complete Tate complex.}
In this section, we record  our rigidity result Corollary~\ref{Rigidity} 
and compare it
 to  the rigidity result of Jason Lutz.

\begin{corollary} \label{Rigidity}
Let $I$ be an ideal  in a local ring $R$.
Assume that $$0\le \mu(I)-
\mu(K_1(I))\le \grade I.$$
Let $P$ be the  minimal two-step Tate complex associated to $I$ in $R$ as described in Definition~{\rm\ref{two-step}} and $T$ be the minimal two-step complete Tate  complex associated to $I$ in $R$ as described in Definition~{\rm\ref{Define T}}.
 The following statements are equivalent{\rm:} 
\begin{enumerate}[\rm(a)]
\item \label{ain.ii}$\HH_i(P)= 0$ for $\mu(K_1(I)) + 1$ consecutive values of $i$ with $0<i${\rm;}
\item \label{ain.iii}$\HH_i(P)= 0$ for $0\ll i${\rm;}
\item \label{ain.iv}$\HH_i(P)= 0$ for all $i$ with $0<i${\rm;}
\item \label{ain.v}$\HH_i(T) = 0$ for $\mu(K_1(I)) + 1$ consecutive values of $i${\rm;}
\item \label{ain.vi}$\HH_i(T)  = 0$  for $i \ll 0${\rm;} 
\item \label{ain.vii}$\HH_i(T) = 0$  for all integers $i${\rm;} and
\item\label{ain.i}  $I$ is a \qci ideal of $R$.
\end{enumerate}
Furthermore, if the above statements hold, then $P$ and $T$ are the minimal resolution and the minimal complete resolution of $R/I$ by free $R$-modules, respectively.
\end{corollary}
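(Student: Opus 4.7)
The plan is to reduce this directly to Theorem~\ref{main} by invoking the generic construction of Proposition~\ref{Jun17-843}. The hypothesis $0\le \mu(I)-\mu(K_1(I))\le \grade I$ is exactly the inequality~(\ref{HYP}) in Data~\ref{5.1}.(\ref{5.1.a}), so the setup data $(\xi,\phi)$ of Definition~\ref{two-step}, together with the ranks $\ff=\mu(I)$ and $\gog=\mu(K_1(I))$, satisfy the conditions of part (\ref{5.1.a}) of Data~\ref{5.1}.

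First I would apply Construction~\ref{X3.4} and Proposition~\ref{Jun17-843} to the pair $(\xi,\phi)$; this produces a local ring $\Ring$, a surjection $\rho\colon \Ring\to R$, and ideals $A\subseteq B$ of $\Ring$, each generated by a regular sequence, with $A\subseteq \ker\rho$, $BR=I$, $\Fdagger\t_\Ring R=F$, $\Gdagger\t_\Ring R=G$, and the maps $\Xi$, $\Phi$ restricting to $\xi$, $\phi$ respectively. This verifies all of Data~\ref{5.1}, so the full conclusion of Theorem~\ref{main} is available for this specific data.

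Next, I would invoke Proposition~\ref{4.5}.(\ref{4.5.b}), which identifies $\Pdagger\t_{\Ring/A}R$ with the minimal two-step Tate complex $P$ and $\Tdagger\t_{\Ring/A}R$ with the minimal two-step complete Tate complex $T$ associated to $(\xi,\phi)$. Taking homology of these base-changed complexes gives
\[
\HH_i(P)\cong \Tor_i^{\Ring/A}(\Ring/B,R)\quad\text{and}\quad \HH_i(T)\cong \widehat{\Tor}_i^{\Ring/A}(\Ring/B,R)
\]
for every integer $i$. Under this dictionary, conditions (\ref{ain.ii})--(\ref{ain.vii}) of Corollary~\ref{Rigidity} correspond termwise to conditions (\ref{main.ii})--(\ref{main.vii}) of Theorem~\ref{main}, with $\gog=\mu(K_1(I))$, and condition (\ref{ain.i}) matches (\ref{main.i}). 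Hence Theorem~\ref{main} immediately yields the equivalence of (\ref{ain.i})--(\ref{ain.vii}).

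For the final assertion, the ``Furthermore'' clause of Theorem~\ref{main} states that when these equivalent conditions hold, $\Pdagger\t_{\Ring/A}R$ and $\Tdagger\t_{\Ring/A}R$ are the minimal resolution and minimal complete resolution of $R/I$ by free $R$-modules; applying the identification from Proposition~\ref{4.5}.(\ref{4.5.b}) translates this into the statement that $P$ and $T$ themselves play these roles. There is no real obstacle in the argument; the only thing to be careful about is confirming that the generic construction of Proposition~\ref{Jun17-843} genuinely satisfies every clause of Data~\ref{5.1}.(\ref{5.1.c})---in particular the compatibility of $\Xi\t 1$ with $\xi$ and of $\Phi\t 1$ with $\phi$---which is exactly what Proposition~\ref{Jun17-843}.(\ref{4.3.f})--(\ref{4.3.g}) provides.
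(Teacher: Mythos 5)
Your proposal is correct and follows essentially the same route as the paper's own proof: apply Theorem~\ref{main} to the generic data of Proposition~\ref{Jun17-843} and then use the identifications $\Pdagger\t_{\Ring/A}R=P$ and $\Tdagger\t_{\Ring/A}R=T$ from Proposition~\ref{4.5}.(\ref{4.5.b}). The paper states this in two sentences; you have simply spelled out the intermediate dictionary $\HH_i(P)\cong\Tor_i^{\Ring/A}(\Ring/B,R)$ and $\HH_i(T)\cong\widehat{\Tor}_i^{\Ring/A}(\Ring/B,R)$ explicitly.
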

\begin{proof}
Apply Theorem~\ref{main} to the generic data built in Proposition~\ref{Jun17-843} for the ideal $I$ in $R$.
Recall from Proposition~\ref{4.5}.(\ref{4.5.b}) that $\Pdagger\t_{\Ring/A}R=P$ and $\Tdagger\t_{\Ring/A}R=T$.
\end{proof}

\begin{corollary}\label{7.5} If $I$ is a \qci ideal in a local ring $R$ and $T$ is the minimal complete resolution of $R/I$ by free $R$-modules, then $T$ is isomorphic to $\Hom_R(T,R)$.
\end{corollary}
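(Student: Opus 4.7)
The plan is to invoke the uniqueness of minimal complete resolutions (Theorem~8.4 of \cite{AM02}) after recognizing $\Hom_R(T,R)$ as itself a minimal complete resolution of $R/I$ by free $R$-modules. Since $T$ is already such by Corollary~\ref{Rigidity}, uniqueness then forces $T\cong \Hom_R(T,R)$.

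The routine properties of $\Hom_R(T,R)$ are inherited from $T$: it is a complex of finitely generated free $R$-modules (since $T$ is); it is exact, because the defining condition of a complete resolution requires $\HH^i(T^*)=0$; and it is minimal, because the dual of a minimal homomorphism of finitely generated free modules over a local ring is again minimal. The substantive step is to exhibit a sufficiently-high-degree truncation of $\Hom_R(T,R)$ (viewed as a chain complex) as a truncation of a free resolution of $R/I$. Using the formula $T_i=P_i\t_R\bigwedge^{\gog}G\p (P_{\ff-\gog-1-i})^{\vee}$, one sees that in sufficiently negative degrees of $T$ the first summand vanishes, so $T_i\cong (P_{\ff-\gog-1-i})^{\vee}$; dualizing and using freeness of $P_j$ gives $\Hom_R(T_i,R)\cong P_{\ff-\gog-1-i}\t_R(\bigwedge^{\ff}F)^{*}$. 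With the natural chain-complex indexing of the dual, this exhibits the high-positive-degree part of $\Hom_R(T,R)$ as a truncation of $P\t_R(\bigwedge^{\ff}F)^{*}$, which is itself a free resolution of $R/I$ (since $P$ is, and $\bigwedge^{\ff}F$ is a rank-one free twist).

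The main obstacle lies in reconciling the degree shift introduced when converting the $\Hom$ cochain complex to a chain complex with the identification of $\Hom_R(T,R)$ as a complete resolution of the module $R/I$ itself (as opposed to some iterated syzygy of $R/I$). That reconciliation is dictated by the palindromic symmetry $i\mapsto \ff-\gog-1-i$ built into Definition~\ref{Define T}: the resolution $P\t_R\bigwedge^{\gog}G$ and the twisted dual $P^{\vee}[\gog-\ff]$ both compute $R/I$ (the latter by Proposition~\ref{4.2}), and the mapping cone that splices them is designed precisely so that their roles are swapped under $\Hom_R(-,R)$, returning $T$ up to the canonical identifications coming from the rank-one free modules $\bigwedge^{\gog}G$ and $\bigwedge^{\ff}F$.
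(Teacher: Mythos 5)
Your route is genuinely different from the paper's. The paper simply observes that by Corollary~\ref{Rigidity} the minimal complete resolution $T$ \emph{is} the explicit minimal two-step complete Tate complex of Definition~\ref{Define T}, and then asserts that this explicit object is self-dual by inspection of the formula $T_i = P_i\t_R\bigwedge^{\gog}G\p (P_{\ff-\gog-1-i})^\vee$ (under which dualizing visibly swaps the two summands via $i\mapsto\ff-\gog-1-i$, up to the rank-one twists by $\bigwedge^{\ff}F$ and $\bigwedge^{\gog}G$). You instead invoke the uniqueness of minimal complete resolutions from \cite{AM02}, which is a legitimate and arguably cleaner framing of \emph{why} the formula must be self-dual. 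However, the two routes converge on the same substantive point, and you have correctly located it: identifying $\Hom_R(T,R)$ (with its natural reindexing) as a minimal complete resolution of $R/I$ itself, rather than of a syzygy or cosyzygy. Your computation that the high-degree part of $\Hom_R(T,R)$ looks like $P_{\ff-\gog-1-i}\t_R(\bigwedge^{\ff}F)^*$ in fact produces a truncation of $P$ that is offset by $\ff-\gog-1=\grade I-1$ in its indexing, so a shift does appear; your final paragraph acknowledges this but resolves it by appealing to ``the palindromic symmetry built into Definition~\ref{Define T},'' which is precisely the direct verification the paper performs and which, once carried out, already gives $T\cong\Hom_R(T,R)$ without needing \cite{AM02} at all. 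So the uniqueness theorem is not really load-bearing: the step you single out as the ``main obstacle'' still requires reading off the palindromic structure of the explicit construction. In fairness, the paper's own two-sentence proof is equally informal about the degree bookkeeping, so you are not at a disadvantage here; but the uniqueness detour does not save any explicit work.
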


\begin{proof} The minimal complete resolution of $R/I$ is shown in Corollary~\ref{Rigidity} to be the  
minimal two-step complete Tate complex associated to $I$ in $R$
of \ref{Define T}. This complex has the stated property.\end{proof} 

Theorem~\ref{Lutz} is Jason Lutz's rigidity result. 

\begin{theorem} {\rm(\cite[Thm.~3.1]{L16})} \label{Lutz} Let $I$ be an ideal in a local ring $R$ and let $P$ be the two-step Tate complex for $I$. 
Assume $
\mu(K_1(I))\le \mu(I)-\grade(I)$.
 If 
$\HH_i (P) = 0$ for $q\le  i\le q+\mu(I)-\grade(I)$, 
for some integer $q$, with $2\le q$,
then $I$ is a quasi-complete intersection.
\end{theorem}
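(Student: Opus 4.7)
The plan is to derive Theorem~\ref{Lutz} from our Corollary~\ref{Rigidity}. The hypothesis of Corollary~\ref{Rigidity}, namely $0\le \mu(I)-\mu(K_1(I))\le \grade I$, includes the inequality $\mu(K_1(I))\ge \mu(I)-\grade I$, which runs \emph{opposite} in direction to Lutz's standing assumption $\mu(K_1(I))\le \mu(I)-\grade I$. The heart of the argument will therefore be to verify both inequalities at once, forcing the equality $\mu(K_1(I))=\mu(I)-\grade I$.

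First I would invoke (or prove as a preparatory lemma) the general bound $\mu(K_1(I))\ge \mu(I)-\grade I$, which is valid for every proper ideal in a local Noetherian ring. One natural route is via Proposition~\ref{2.2.5}: choose a regular sequence $\underline x=x_1,\dots,x_r$ of length $r=\grade I$ inside a minimal generating set for $I$ (which exists by prime avoidance), and pass to $I'=I/(\underline x)$ in $R'=R/(\underline x)$. The quasi-isomorphism from Proposition~\ref{2.2.5} shows $\mu(K_1(I'))=\mu(K_1(I))$, while $\mu(I')=\mu(I)-r$ and $\grade I'=0$, so the bound reduces to showing $\mu(K_1(I'))\ge \mu(I')$ when $\grade I'=0$. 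In that case $I'$ is contained in an associated prime $\mathfrak p$ of $R'$, and a non-zero element $s\in R'$ with $\mathfrak p s=0$ annihilates $I'$; the Koszul cycles $s e_1,\dots,s e_{\mu(I')}$ then furnish enough independent classes in $\HH_1$ to deliver the inequality.

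Combined with Lutz's hypothesis, this forces the equality $\mu(K_1(I))=\mu(I)-\grade I$, so the hypothesis of Corollary~\ref{Rigidity} is met. Lutz's vanishing assumption then provides $\HH_i(P)=0$ for $\mu(I)-\grade I+1 = \mu(K_1(I))+1$ consecutive values of $i$, all of which are strictly positive since $q\ge 2$. This is precisely condition~(a) of Corollary~\ref{Rigidity}, so the implication (a)$\Longrightarrow$(g) of that corollary shows that $I$ is a \qci ideal.

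The principal obstacle in the argument is the careful justification of the standard inequality $\mu(K_1(I))\ge \mu(I)-\grade I$; once it is in hand, the remainder of the proof is a direct translation into the statement of Corollary~\ref{Rigidity}. Notice also that Lutz's hypothesis $q\ge 2$ is used only to ensure that all $\mu(K_1(I))+1$ degrees of vanishing are positive, matching the positivity requirement of condition~(a).
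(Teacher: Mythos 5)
The paper does not supply a proof of Theorem~\ref{Lutz}: it is cited verbatim from \cite{L16}, and the only commentary the authors offer is Remark~\ref{7.8}, where they note that the technical inequality $\mu(K_1(I))\le \mu(I)-\grade I$ required by Lutz and the inequality $\mu(I)-\grade I\le \mu(K_1(I))$ required by Corollary~\ref{Rigidity} ``appear to be different,'' and they do not attempt to deduce one result from the other. Your proposal to derive Theorem~\ref{Lutz} from Corollary~\ref{Rigidity} via the general inequality $\mu(K_1(I))\ge \mu(I)-\grade I$ is therefore a genuinely different route from anything in the paper, and if it succeeded it would sharpen Remark~\ref{7.8} considerably: it would show that Lutz's numerical hypothesis \emph{forces} the equality $\mu(K_1(I))=\mu(I)-\grade I$, so that Corollary~\ref{Rigidity} strictly subsumes Theorem~\ref{Lutz} (and even removes the restriction $q\ge 2$).

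The gap is in the key lemma $\mu(K_1(I))\ge \mu(I)-\grade I$, and more precisely in your treatment of the grade-zero case. After reducing via Proposition~\ref{2.2.5} to $I'$ of grade $0$ in $R'$, you pick $s\ne 0$ with $I's=0$ and assert that the Koszul cycles $se_1,\dots,se_{\mu(I')}$ ``furnish enough independent classes in $\HH_1$.'' What is needed is that these classes are $k$-linearly independent in $\HH_1/\m'\HH_1=Z_1/(\m' Z_1+B_1)$. Write $se_{i_0}+\sum_{i\ne i_0}\lambda_i se_i=z+b$ with $z\in\m' Z_1$, $b\in B_1$; since $Z_1\subseteq\m' F$ (minimality of $\xi$) one has $\m' Z_1\subseteq(\m')^2F$, and $B_1\subseteq I'F$, so comparing $e_{i_0}$-coordinates gives only $s\in(\m')^2+I'$ --- which is \emph{not} a contradiction. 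Indeed $s$ can always be taken to satisfy $s\in(\m')^2+I'$: for $R'=k[[x,y]]/(x^2)$ and $I'=(x)$, every annihilator of $I'$ lies in $(x)\subseteq(\m')^2+I'=(x,y^2)$. In that example the conclusion $\mu(\HH_1)\ge\mu(I')$ does hold, so the lemma is not refuted, but the argument you give does not establish it. You would need a sharper reason why $se_i\notin\m' Z_1+B_1$ (for a suitably chosen $s$) --- for instance, an argument that controls $\m' Z_1+B_1$ itself rather than the much larger set $(\m')^2F+I'F$, or a localization argument at a prime where the grade is attained combined with control of how $\mu(I)$ and $\mu(\HH_1)$ drop under localization. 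As it stands, this step is a genuine missing idea, and without it the reduction to Corollary~\ref{Rigidity} is not justified.

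One small side note: the paper's Remark~\ref{7.8}(2) records that Corollary~\ref{Rigidity} permits the band of vanishing to start at $i=1$ whereas Lutz requires $q\ge 2$; your proposal only needs the direction ``Lutz $\Rightarrow$ Corollary~\ref{Rigidity}(a),'' so the restriction $q\ge 2$ is harmless in your argument, as you correctly observe.
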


\begin{remark}\label{7.8} The  results \ref{Rigidity} and \ref{Lutz}  agree in that they both show that if $\HH_i(P)=0$ for an  appropriate collection of consecutive integers $i$, then $I$ is a \qci. The two results differ in three aspects:\begin{enumerate}[\rm(1)]
 \item\label{6.2.1} the technical assumption on the acceptable inequalities relating $\mu(I)$, 
$\mu(K_1(I))$, and $\grade I$ appear to be different;
\item\label{6.2.2} our result allows one to be the beginning of the band of vanishing homology, but Lutz insists that band begin at some integer which is at least two; and
\item\label{6.2.3} our result needs 
$\mu(K_1(I))+1$ consecutive integers $i$ with $\HH_i(P)=0$; whereas Lutz's result needs $\HH_i(P)$ to vanish for $\mu(I)-\grade(I)+1$ consecutive values of $i$. 
\end{enumerate}
Notice, however, that if $I$ is a \qci then $\grade I=\mu(I)-
\mu(K_1(I))$, see \cite[1.2]{AHS13}. In this case, both technical assumptions from (\ref{6.2.1}) hold and the  parameters from (\ref{6.2.3}) are equal.
\end{remark}

\section{Application: The dimension theorem for quasi-homogeneous \qci ideals.}
In this section we reprove \cite[Thm.~4.1(c)]{AHS13}   using different methods. The ring $R$ in the following result is non-negatively graded over a field; this ring does not have to be standard graded.

\begin{proposition} Let $R=\bigoplus_{0\le i} R_i$ be a local graded ring with $R_0$ equal to a field. If $I$ is a homogeneous \qci ideal in  $R$, then
$$\grade I=\dim R-\dim R/I.$$ \end{proposition}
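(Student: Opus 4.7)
The plan is to exploit the explicit minimal graded resolution of $R/I$ provided by the minimal two-step Tate complex of Section~\ref{complete Tate}, and then to extract $\dim(R/I)$ from the order of the pole of its Hilbert series at $t=1$.

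First I would carry out the construction of Definition~\ref{two-step} grade-homogeneously. Since $I$ is homogeneous, choose homogeneous minimal generators $f_1,\dots,f_{\ff}$ of $I$ of internal degrees $a_1,\dots,a_{\ff}$, and homogeneous cycles $z_1,\dots,z_{\gog}\in Z_1(\bigwedge^\bullet F)$ of internal degrees $b_1,\dots,b_{\gog}$ whose classes minimally generate $\HH_1(\bigwedge^\bullet F)$. Setting $F=\bigoplus_{i=1}^{\ff}R(-a_i)$ and $G=\bigoplus_{j=1}^{\gog}R(-b_j)$ makes the data $(\done,\blop)$ into maps of graded $R$-modules, so the associated two-step Tate complex $P$ is a complex of graded free $R$-modules with grading-preserving differentials. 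Because $I$ is a \qci, Corollary~\ref{Rigidity} tells us that $P$ is the minimal free resolution of $R/I$. The inclusion $I\subseteq \m$ forces $a_i\ge 1$ for every $i$; and because each cycle $z_j=\sum_i r_{ij}e_i$ requires $r_{ij}\in R_{b_j-a_i}$ while $R_{<0}=0$, one also gets $b_j\ge \min_i a_i\ge 1$ for every $j$.

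Next I would compute the Hilbert series of $R/I$ by taking the Euler characteristic of $P$. Each
$$\ts P_n=\bigoplus_{2p+q=n}\bigwedge^q F\t_{R} D_p G$$
is a graded free $R$-module; since the divided power generators sit in homological degree $2$ we have $(-1)^{2p+q}=(-1)^q$. A routine generating-function calculation (summing the elementary symmetric polynomials in $t^{a_1},\dots,t^{a_{\ff}}$ against the complete homogeneous symmetric polynomials in $t^{b_1},\dots,t^{b_{\gog}}$) then gives
$$\frac{\HS_{R/I}(t)}{\HS_{R}(t)}=\prod_{i=1}^{\ff}(1-t^{a_i})\cdot\prod_{j=1}^{\gog}\frac{1}{1-t^{b_j}}.$$

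Finally I would invoke the Hilbert--Serre theorem: the Krull dimension of a finitely generated graded $R$-module equals the order of the pole at $t=1$ of its Hilbert series. Since every exponent $a_i$ and $b_j$ is positive, each factor $(1-t^{a_i})$ and $(1-t^{b_j})$ contributes a simple zero at $t=1$, so the right-hand side above has a zero of order $\ff-\gog$ there. Therefore $\dim(R/I)=\dim R-(\ff-\gog)$, and combining this with the identity $\grade I=\ff-\gog$ of \cite[Lem.~1.2]{AHS13} yields $\grade I=\dim R-\dim(R/I)$, as desired. The only real obstacle is a bookkeeping one: verifying that the Tate construction can be assembled as a map of graded modules and confirming that all the exponents $b_j$ are positive, both of which are handled in the first paragraph.
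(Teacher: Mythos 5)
Your proof is correct, and it takes a genuinely more direct route than the paper's. The paper's proof invokes Corollary~\ref{May19} to obtain the generic pair of nested complete intersections $A \subseteq B$ in a ring $\Ring$, uses the Tor-vanishing to observe that a resolution of $\Ring/B \otimes_{\Ring/A} \Ring/C$ is the tensor product of resolutions of $\Ring/B$ and $\Ring/C$, and extracts the Hilbert-series identity
$$\HS_{R/I}(t) = \frac{\HS_{\Ring/B}(t)\,\HS_{\Ring/C}(t)}{\HS_{\Ring/A}(t)}$$
from that, finishing as you do by reading off the order of the pole at $t=1$. Your version bypasses the base change entirely: you make the two-step Tate complex graded (tracking the internal degrees $a_i$ and $b_j$), use the fact that it is the minimal free resolution of $R/I$ to compute $\HS_{R/I}(t)$ as an Euler characteristic, and arrive at the same numerical identity $\HS_{R/I}(t)/\HS_R(t) = \prod_i(1-t^{a_i})\prod_j(1-t^{b_j})^{-1}$. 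One small thing worth making explicit: since the Tate complex is unbounded, the Euler-characteristic convergence needs the observation that in fixed internal degree $d$ only the homological degrees $n \le 2d$ contribute, which follows from your positivity estimates $a_i \ge 1$ and $b_j \ge 1$. Your approach is arguably the cleaner of the two for this proposition, since it never has to confront the question of how to grade the auxiliary ring $\Ring$ (whose adjoined indeterminates corresponding to $X_2 = F^* \otimes G$ carry degrees $b_j - a_i$, requiring care to keep $\Ring_0$ a field); the trade-off is that the paper's formulation reinforces its unifying theme that every \qci phenomenon descends from a nested pair of complete intersections by base change.
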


\begin{proof} Apply Corollary~\ref{May19} and identify 
a local ring $\Ring$ and ideals $A\subseteq B\cap C$ in $\Ring$ with $A$ generated by a regular sequence of length $\mu(I)-\grade(I)$, $B$ generated by a regular sequence of length $\mu(I)$, $\Tor_i^{\Ring/A}(\Ring/B,\Ring/C)=0$ for $1\le i$, $R=\Ring/C$, and $I=BR$.
If $J$ is generated by a regular sequence of length $r$ in  a local ring $Q$, then
$$\dim Q/J=\dim Q-r=\dim Q-\grade J.$$
Therefore, $$\dim \Ring/A=\dim \Ring-\mu(I)+\grade I;\quad
\dim \Ring/B=\dim \Ring-\mu(I);$$ 
and $\dim \Ring/A-\dim \Ring/B=\grade I$.

On the other hand, the fact that $\Tor_i^{\Ring/A}(\Ring/B,\Ring/C)=0$, for $1\le i$, ensures that 
a resolution of $\Ring/B\t_{\Ring/A} \Ring/C$ by free $\Ring/A$-modules may be obtained by forming the tensor product of a resolution of $\Ring/B$ with a resolution of $\Ring/C$; hence, the Hilbert series of 
these rings are related by the following identity: 
$$\HH_{(\Ring/B)\t_{\Ring/A} (\Ring/C)}(t)=\frac{\HH_{\Ring/B}(t) \HH_{\Ring/C}(t)}{\HH_{\Ring/A}(t)}.$$
 We conclude that 
 $$\dim (\Ring/B\t \Ring/C) =\dim \Ring/B + \dim \Ring/C -\dim \Ring/A.$$
The ring $\Ring/C$ is equal to $R$; the ring $\Ring/B\t \Ring/C$ is equal to $\Ring/(B+C)=R/I$; and
$\dim \Ring/A-\dim \Ring/B= \grade I$. It follows that
$$\dim R/I=\dim R-\grade I.$$
\end{proof}

\end{document}